\theoremstyle{plain}
\newtheorem{theorem}{Theorem}
\newtheorem{lemma}[theorem]{Lemma}
\newtheorem{proposition}[theorem]{Proposition}
\newcommand{\N}{\mathbb{N}}
\newcommand{\R}{\mathbb{R}}
\newcommand{\forest}{\mathcal{W}}
\newcommand{\join}{\ast}
\newcommand{\ostar}{\mathrm{St}}
\newcommand{\cstar}{\overline{\mathrm{St}}}
\newcommand{\activestar}{\mathrm{Act\cstar}}
\newcommand{\link}{\mathrm{Lk}}
\newcommand{\field}{\mathbb{F}}
\newcommand{\complex}{\mathbb{K}}
\newcommand{\othercomplex}{\mathbb{L}}
\newcommand{\subcomplex}{\mathbb{L}}
\newcommand{\vertexset}{\mathcal{V}}
\newcommand{\othervertexset}{\mathcal{W}}
\newcommand{\nodeset}{N}
\newcommand{\node}{x}
\newcommand{\nodechildone}{y_1}
\newcommand{\nodechildtwo}{y_2}
\newcommand{\boundarymatrix}{\partial}
\newcommand{\rmatrix}{R}
\newcommand{\otherrmatrix}{S}
\newcommand{\arbmatrix}{A}
\newcommand{\dict}{D}
\newcommand{\innerdict}{CoF}
\newcommand{\dsmatrix}{M}
\newcommand{\chunk}{C}
\newcommand{\id}{\mathrm{id}}
\newcommand{\vectorspace}{\mathbb{V}}
\newcommand{\linearmap}{f}
\newcommand{\smap}{\phi}
\newcommand{\othersmap}{\psi}
\newcommand{\contr}[3]{(#1, #2) \leadsto #1}
\newcommand{\tower}{\mathcal{T}}
\newcommand{\filtration}{\mathcal{F}}
\newcommand{\inc}{\mathrm{inc}}
\newcommand{\towerdim}{\Delta}
\newcommand{\towerwidth}{\omega}
\setlist{itemsep=0mm}
\title{Barcodes of Towers and a Streaming Algorithm for Persistent Homology}
\author{Michael Kerber \thanks{Graz University of Technology, Austria} \and Hannah Schreiber \footnotemark[1]}
\date{\today} 
\begin{document}

\maketitle
\begin{abstract}
	A tower is a sequence of simplicial complexes connected by simplicial maps.
	We show how to compute a filtration, a sequence of nested simplicial complexes,
	with the same persistent barcode as the tower.
	Our approach is based on the coning strategy by Dey et al. (SoCG 2014).
	We show that a variant of this approach yields a filtration that is asymptotically only
	marginally larger than the tower and can be efficiently computed by a streaming
	algorithm, both in theory and in practice.
	Furthermore, we show that our approach can be combined with a streaming algorithm
	to compute the barcode of the tower via matrix reduction. The space complexity
	of the algorithm does not depend on the length of the tower, but the maximal size
	of any subcomplex within the tower. Experimental evaluations show that our approach
	can efficiently handle towers with billions of complexes.
\end{abstract}

\section{Introduction}

\paragraph{Motivation and problem statement.}
Persistent homology~\cite{elz-topological,carlsson-survey,edelsbrunnerharer} is a paradigm
to analyze how topological properties of general data sets evolve across
multiple scales. Thanks to the success of the theory in finding applications
(see, e.g., \cite{oudot-book,kerber-survey} for recent enumerations), there is a growing
demand for efficient computations of the involved topological invariants.

In this paper, we consider a sequence of simplicial complexes $(\complex_i)_{i=0,\ldots,m}$
and simplicial maps $\smap_i:\complex_i\rightarrow \complex_{i+1}$ connecting them, 
calling this data a \emph{(simplicial) tower} of length $m$.
Applying the homology functor with an arbitrary field, 
we obtain a \emph{persistence module}, 
a sequence of vector spaces connected by linear maps. Such a module decomposes into a \emph{barcode},
a collection of intervals, each representing a homological feature in the tower
that spans over the specified range of scales. 

Our computational problem is to compute the barcode of a given tower efficiently.
The most prominent case of a tower is when all maps $f_i$ are inclusion maps.
In this case one obtains a \emph{filtration}, a sequence of nested simplicial complexes.
A considerable amount of work went into the study of fast algorithms for the filtration case,
which culminated in fast software libraries for this task. 
The more general case of towers recently received
growing interest in the context of sparsification technique for the Vietoris-Rips and \v{C}ech
complexes; see the related work section below for a detailed discussion.

\paragraph{Results.}
As our first result, we show that any tower can be \emph{efficiently} converted into
a \emph{small} filtration with the same barcode. Using the well-known concept
of \emph{mapping cylinders} from algebraic topology~\cite{hatcher}, it is easy to see
that such a conversion is possible in principle. 
Dey, Fan, and Wang~\cite{dfw-computing} give an explicit construction, called ``coning'',
for the generalized case of \emph{zigzag towers}. Using a simple
variant of their coning strategy, we obtain a filtration whose size is only
marginally larger than the length of the tower in the worst case. 
Furthermore, we experimentally show that the size is even smaller on realistic instances.

To describe our improved coning strategy, 
we discuss the case that a simplicial map in the tower contracts two vertices $u$ and $v$.
The coning strategy by Dey et al.\ proposes
to join $u$ with the closed star of $v$, making all incident simplices of $v$ incident
to $u$ without changing the homotopy type. 
The vertex $u$ is then taken as the representative
of the contracted pair in the further processing of the tower.
We refer to the number of simplices that the join operation adds to the complex
as the \emph{cost} of the contraction.
Quite obviously, the method is symmetric in $u$ and $v$, and we have two choices
to pick the representative, leading to potentially quite different costs. 
We employ the self-evident strategy to pick the representative that leads to smaller costs
(somewhat reminiscent of the ``union-by-weight'' rule
in the union-find data structure~\cite[\S 21]{cormen}).
Perhaps surprisingly, this idea leads to an asymptotically
improved size bound on the filtration. We prove this by an abstraction
to path decompositions on weighted forest which might be of some
independent interest. Altogether, the worst-case size of the filtration is
$O(\towerdim\cdot n \cdot \log(n_0))$, where $\towerdim$ is the maximal dimension of any complex
in the tower, and $n$/$n_0$ is the number of simplices/vertices added to the tower.

We also provide a conversion algorithm whose time complexity is roughly proportional
to the total number of simplices in the resulting filtration.
One immediate benefit is a generic solution to compute barcodes of towers:
just convert the tower to a filtration and apply one of the efficient implementations
for barcodes of filtrations. Indeed, we experimentally show that on not-too-large
towers, our approach is competitive with, and sometimes outperforms \textsc{Simpers},
an alternative approach that computes the barcode of towers with \emph{annotations},
a variant of the persistent cohomology algorithm. 

Our second contribution is a space-efficient version of the just mentioned algorithmic pipeline
that is applicable to very large towers.
To motivate the result, let the \emph{width} 
of a tower denote the maximal size of any simplicial complex among the $\complex_i$.
Consider a tower with a very large length (say, larger than the number of bytes in main memory) 
whose width remains relatively small.
In this case, our conversion algorithm yields a filtration that is very large as well.
Most existing implementations for barcode computation 
read the entire filtration on initialization and must be 
converted to streaming algorithm to handle such instances.
Moreover, algorithms based on matrix reduction are required to keep previously reduced columns
because they might be needed in subsequent reduction steps.
This leads to a high memory consumption for the barcode computation.

We show that with minor modifications, the standard persistent algorithm
can be turned into a streaming algorithm with smaller space complexity
in the case of towers.
The idea is that upon contractions, simplices become \emph{inactive} and cannot get additional
cofaces. Our approach makes use of this observation by modifying the boundary matrix
such that columns associated to inactive simplices can be removed. 
Combined with our conversion algorithm, we can compute the barcode of a tower of width $\towerwidth$
keeping only up to $O(\towerwidth)$ columns of the boundary matrix in memory.
This yields a space complexity of $O(\towerwidth^2)$ and a time complexity
of $O((\towerdim\cdot n \cdot \log(n_0))\towerwidth^2)$ in the worst case.
We implemented a practically improved variant
that makes use of additional heuristics to speed up the barcode
computation in practice and resembles the \emph{chunk algorithm}
presented in~\cite{chunk}.

We tested our implementation on various challenging data sets. The source code of the implementation 
is available\footnote{at \url{https://bitbucket.org/schreiberh/sophia/}} and the software was named Sophia.

\paragraph{Related work.}
Already the first works on persistent homology pointed out the existence of efficient
algorithm to compute the barcode invariant (or equivalently, the persistent diagram)
for filtrations~\cite{elz-topological,zc-computing}. 
As a variant of Gaussian elimination, the worst-case complexity is cubic.
Remarkable theoretical follow-up results are a persistence algorithm in matrix multiplication
time~\cite{mms-zigzag}, an output-sensitive algorithm 
to compute only high-persistent features with linear space complexity~\cite{ck-output}, 
and a conditional lower bound on the complexity relating the problem 
to rank computations of sparse matrices~\cite{ep-computational}.

On realistic instances, the standard algorithm has shown a quasi-linear behavior in practice
despite its pessimistic worst-case complexity. Nevertheless, many improvements of the standard algorithm
have been presented in the last years which improve the runtime by several orders
of magnitude. One line of research exploits the special structure of the boundary matrix
to speed up the reduction process~\cite{ck-twist}. This idea has led to efficient
parallel algorithms for persistence in shared~\cite{chunk} and distributed memory~\cite{bkr-distributed}.
Moreover, of same importance as the reduction strategy is an appropriate choice of
data structures in the reduction process as demonstrated by the \textsc{PHAT} library~\cite{bkrw-phat}.
A parallel development was the development of dual algorithms using persistent cohomology,
based on the observation that the resulting barcode is identical~\cite{smv-duality}.
The \emph{annotation algorithm}~\cite{dfw-computing,bdm-compressed} is an optimized variant 
of this idea realized in the \textsc{Gudhi} library~\cite{mbgy-gudhi}.
It is commonly considered as an advantage of annotations 
that only a cohomology basis must be kept during the reduction process,
making it more space efficient than reduction-based approaches.
We refer to the comparative study~\cite{optgh-roadmap}
for further approaches and software for persistence on filtrations.

Moreover, generalizations of the persistence paradigm are an active field of study. 
\emph{Zigzag persistence} is a variant of persistence where the maps in the filtration are allowed
to map in either direction (that is, either 
$\smap_i:\complex_i\hookrightarrow \complex_{i+1}$ or $\smap_i:\complex_i\hookleftarrow \complex_{i+1}$).
The barcode of zigzag filtrations is well-defined~\cite{zigzag} as a consequence of Gabriel's theorem~\cite{gabriel}
on decomposable quivers~-- see~\cite{oudot-book} for a comprehensive introduction.
The initial algorithms to compute this barcode~\cite{csm-zigzag} has been improved recently~\cite{mo-zigzag}.
Our case of towers of complexes and simplicial maps can be modeled as a zigzag filtration and therefore
sits in-between the standard and the zigzag filtration case.

Dey et al.~\cite{dfw-computing} described the first efficient algorithm to compute the barcode of towers.
Instead of the aforementioned coning approach explained in their paper, their implementation handles contractions with an empirically smaller
number of insertions, based on the link condition. 
Recently, the authors have released the \textsc{SimPers} library%
\footnote{\url{http://web.cse.ohio-state.edu/~tamaldey/SimPers/Simpers.html}}
that implements their annotation algorithm from the paper.

The case of towers has received recent attention in the context of approximate 
Vietoris-Rips and \v{C}ech filtrations.
The motivation for approximation is that the (exact) topological analysis of a set of $n$ points in $d$-dimensions
requires a filtration of size $O(n^{d+1})$ which is prohibitive for most interesting input sizes.
Instead, one aims for a filtration or tower of much smaller size, with the guarantee that the approximate barcode
will be close to the exact barcode (``close'' usually means that the bottleneck distance between the barcodes
on the logarithmic scale is upper bounded; we refer to the cited works for details).
The first such type of result by Sheehy~\cite{sheehy-linear} resulted in a approximate filtration;
however, it has been observed that passing to towers allows more freedom in defining the approximation complexes
and somewhat simplifies the approximation schemes conceptually. 
See~\cite{dfw-computing,bs-approximating,ks-approximate,ckr-polynomial} for examples.
Very recently, the SimBa library~\cite{dsw-simba} brings these theoretical approximation techniques 
for Vietoris-Rips complexes into practice.
The approach consists of a geometric layer to compute a tower, and an algebraic layer to compute its barcode,
for which they use \textsc{SimPers}. Our approach can be seen as an alternative realization of this algebraic layer.

\medskip

This paper is a more complete version of the conference paper \cite{ks_socg17}. 
It provides missing proof details from \cite{ks_socg17} and a conclusion, both omitted in the former version for space restrictions.
In addition, the experimental results were redone with the most recent versions of the corresponding software libraries. 
Furthermore, the new subsection~\ref{ssec:tightness_bounds} discusses the tightness of the complexity bound of our first main result.

\paragraph{Outline.} 
We introduce the necessary basic concepts in Section~\ref{sec:background}.
We describe our conversion algorithm from general towers to barcodes
in Section~\ref{sec:from_towers_to_filtrations}.
The streaming algorithm for persistence is discussed 
in Section~\ref{sec:persistence_by_streaming}.

\section{Background}\label{sec:background}

\paragraph{Simplicial Complexes.}
Given a finite \emph{vertex set} $V$, 
a \emph{simplex} is merely a non-empty subset of $V$;
more precisely, a \emph{$k$-simplex} is a subset consisting
of $k+1$ vertices, and $k$ is called the \emph{dimension} of the simplex.
Throughout the paper, we will denote simplices by small Greek letters,
except for vertices ($0$-simplices) which we denote by $u$, $v$, and $w$.
For a $k$-simplex $\sigma$, a simplex $\tau$ is a \emph{face} of $\sigma$
if $\tau\subseteq\sigma$. If $\tau$ is of dimension $\ell$, we call it
a $\ell$-face. If $\ell<k$, we call $\tau$ a \emph{proper face} of $\sigma$,
and if $\ell=k-1$, we call it a \emph{facet}. 
For a simplex $\sigma$ and a vertex $v\notin\sigma$, we define the \emph{join}
$v\join\sigma$ as the simplex $\{v\}\cup \sigma$.
These definition are inspired by visualizing a $k$-simplex as the convex hull
of $k+1$ affinely independent points in $\R^k$, but we will not need
this geometric interpretation in our arguments.

An \emph{(abstract) simplicial complex} $\complex$ over $V$ is a set of simplices
that is closed under taking faces. 
We call $V$ the \emph{vertex set} of $\complex$ and write $\vertexset(\complex):=V$.
The \emph{dimension} of $\complex$
is the maximal dimension of its simplices. 
For $\sigma,\tau\in \complex$,
we call $\sigma$ a \emph{coface} of $\tau$ in $\complex$ if $\tau$ is a face of $\sigma$.
In this case, $\sigma$ is a \emph{cofacet} of $\tau$ if their dimensions
differ by exactly one.
A simplicial complex $\subcomplex$ is a \emph{subcomplex} of $\complex$ if $\subcomplex\subseteq\complex$.
Given $\othervertexset\subseteq\vertexset$, the \emph{induced subcomplex} by $\othervertexset$ is the set
of all simplices $\sigma$ in $\complex$ with $\sigma\subseteq\othervertexset$.
For a subcomplex $\subcomplex\subseteq\complex$ and a vertex $v\in\vertexset(\complex)\setminus\vertexset(\subcomplex)$,
we define the join $v\join\subcomplex:=\{v\join\sigma\mid\sigma\in\subcomplex\}$.
For a vertex $v\in \complex$, the \emph{star} of $v$ in $\complex$, denoted by $\ostar(v,\complex)$,
is the set of all cofaces of $v$ in $\complex$. In general, the star is not
a subcomplex, but we can make it a subcomplex by adding all faces of star
simplices, which is denoted by the \emph{closed star} $\cstar(v,\complex)$.
Equivalently, the closed star is the smallest subcomplex of $\complex$ containing
the star of $v$. The \emph{link} of $v$, $\link(v,\complex)$, is defined
as $\cstar(v,\complex)\setminus\ostar(v,\complex)$. It can be checked that the link
is a subcomplex of $\complex$. When the complex is clear from context,
we will sometimes omit the $\complex$ in the notation of stars and links.

\paragraph{Simplicial maps.}
A map $\complex\stackrel{\smap}{\rightarrow}\othercomplex$ between simplicial complexes is called \emph{simplicial}
if with $\sigma=\{v_0,\ldots,v_k\}\in\complex$, $\smap(\sigma)$ is equal to $\{\smap(v_0),\ldots,\smap(v_k)\}$
and $\smap(\sigma)$ is a simplex in $\othercomplex$. 
By definition, a simplicial map
maps vertices to vertices and is completely determined by its action on the vertices.
Moreover, the composition of simplicial maps is again simplicial.

A simple example of a simplicial map is the inclusion map $\subcomplex\stackrel{\smap}{\hookrightarrow}\complex$
where $\subcomplex$ is a subcomplex of $\complex$. If $\complex=\subcomplex\cup\{\sigma\}$
with $\sigma\notin\subcomplex$, we call $\smap$ an \emph{elementary inclusion}.
The simplest example of a non-inclusion simplicial map is $\complex\stackrel{\smap}{\rightarrow}\othercomplex$
such that there exist two vertices $u,v\in\complex$ with $\vertexset(\othercomplex)=\vertexset(\complex)\setminus\{v\}$, 
$\smap(u)=\smap(v)=u$, and $\smap$ is the identity on all remaining vertices of $\complex$.
We call $\smap$ an \emph{elementary contraction} and write $\contr{u}{v}{i}$ as a shortcut.
These notions were introduced by Dey, Fan and Wang in \cite{dfw-computing} and they also showed that
any simplicial map $\complex\stackrel{\smap}{\rightarrow}\othercomplex$
can be written as the composition of elementary 
contractions\footnote{
	They talk about "collapses" instead of "contractions", but this notion clashes with the standard notion of simplicial collapses 
	of free faces that we use later. Therefore, we decided to use "contraction", even though the edge between the contracted vertices 
	might not be present in the complex.
}
and inclusions.

A \emph{tower} of length $m$ is a collection of simplicial complexes $\complex_0,\ldots,\complex_m$ 
and simplicial maps $\smap_i:\complex_i\rightarrow\complex_{i+1}$ for $i=0,\ldots,m-1$.
From this initial data, we obtain simplicial maps $\smap_{i,j}:\complex_i\to\complex_{j}$
for $i\leq j$ by composition, where $\smap_{i,i}$ is simply the identity map on $\complex_i$.
The term ``tower'' is taken from category theory, where it denotes a (directed) path in a category with morphisms
from objects with smaller indices to objects with larger indices.
Indeed, since simplicial complexes form a category with simplicial maps as their morphisms, 
the specified data defines a tower in this category.
A tower is called a \emph{filtration} if all $\smap_i$ are inclusion maps.
The \emph{dimension} of a tower is the maximal dimension among the $\complex_i$,
and the \emph{width} of a tower is the maximal size among the $\complex_i$.
For filtrations, dimension and width are determined by the dimension and size of $\complex_m$,
but this is not necessarily true for general towers.

\paragraph{Homology and Collapses.}
For a fixed base field $\field$, let $H_p(\complex):=H_p(\complex,\field)$ 
the \emph{$p$-dimensional homology group} of $\complex$.
It is well-known that $H_p(\complex)$ is a $\field$-vector space.
Moreover, a simplicial map $\complex\stackrel{\smap}{\rightarrow}\othercomplex$
induces a linear map $H_p(\complex)\stackrel{\smap^\ast}{\rightarrow}H_p(\othercomplex)$.
In categorical terms, the equivalent statement is
that homology is a functor from the category of simplicial complexes
and simplicial maps to the category of vector spaces and linear maps.

We will make use of the following homology-preserving operation:
a \emph{free face} in $\complex$, is a simplex with exactly one proper coface in $\complex$. 
An \emph{elementary collapse} in $\complex$ is the operation of 
removing a free face and its unique coface from $\complex$,
yielding a subcomplex of $\complex$. We say that $\complex$ \emph{collapses to} $\subcomplex$, 
if there is a sequence of elementary collapses 
transforming $\complex$ into $\subcomplex$.
The following result is then well-known:

\begin{lemma} \label{lem:collapse}
	Let $\complex$ be a complex that collapses into the complex $\subcomplex$. 
	Then, the inclusion map $\subcomplex\stackrel{\smap}{\hookrightarrow}\complex$
	induces an isomorphism $\smap_\ast$ between $H_p(\subcomplex)$ and $H_p(\complex)$.
\end{lemma}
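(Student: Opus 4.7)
The plan is to reduce the statement to the case of a single elementary collapse by induction on the length of the collapsing sequence, and then handle that base case using the long exact sequence of the pair $(\complex, \subcomplex)$.

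First I would observe that if $\complex = \complex^{(0)} \supseteq \complex^{(1)} \supseteq \cdots \supseteq \complex^{(N)} = \subcomplex$ is a sequence of elementary collapses, then the inclusion $\subcomplex \hookrightarrow \complex$ factors as the composition of the inclusions $\complex^{(i+1)}\hookrightarrow\complex^{(i)}$. Since homology is functorial and the composition of isomorphisms is an isomorphism, it suffices to prove the lemma when $\complex$ is obtained from $\subcomplex$ by a single elementary collapse.

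For the base case, suppose $\complex = \subcomplex \cup \{\tau, \sigma\}$, where $\tau$ is a free face of $\complex$ of dimension $p-1$ and $\sigma$ is its unique proper coface, of dimension $p$. Consider the relative simplicial chain complex $C_\ast(\complex, \subcomplex) = C_\ast(\complex)/C_\ast(\subcomplex)$. By construction this complex is nonzero only in degrees $p-1$ and $p$, where it is one-dimensional, generated by the classes of $\tau$ and $\sigma$ respectively. The relative boundary map sends $[\sigma]$ to $\pm[\tau]$, because every facet of $\sigma$ other than $\tau$ already lies in $\subcomplex$ and is therefore zero in the quotient. Hence the relative boundary is an isomorphism, so $H_q(\complex, \subcomplex) = 0$ for every $q$.

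Plugging this vanishing into the long exact sequence of the pair $(\complex, \subcomplex)$,
\begin{equation*}
\cdots \to H_{q+1}(\complex, \subcomplex) \to H_q(\subcomplex) \xrightarrow{\smap_\ast} H_q(\complex) \to H_q(\complex, \subcomplex) \to \cdots,
\end{equation*}
shows that $\smap_\ast$ is an isomorphism in every degree, which finishes the base case and therefore the proof. The only place that requires care is the identification of the relative boundary in the base case; once one checks that $\tau$ is the sole facet of $\sigma$ outside $\subcomplex$, the rest is a direct application of standard tools.
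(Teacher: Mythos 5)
Your proof is correct. Note that the paper itself offers no argument for this lemma at all --- it is stated as ``well-known'' and left to the literature --- so there is no in-paper proof to compare against; your write-up supplies exactly the standard argument one would cite. Both halves are sound: the reduction to a single elementary collapse by factoring the inclusion through the intermediate complexes and using functoriality, and the base case via the vanishing of $H_q(\complex,\subcomplex)$ and the long exact sequence of the pair. One small point worth making explicit: you tacitly assume the unique proper coface $\sigma$ of the free face $\tau$ has dimension exactly $\dim\tau+1$. This does follow from the paper's definition (if $\dim\sigma\ge\dim\tau+2$, then $\sigma$ would have at least two facets containing $\tau$, each a proper coface of $\tau$ in $\complex$, contradicting uniqueness), but since the paper defines a free face as having exactly one \emph{proper} coface rather than one \emph{cofacet}, a sentence justifying this would make the identification of the relative chain complex airtight. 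With that observed, every facet of $\sigma$ other than $\tau$ lies in $\subcomplex$, the relative boundary is $\pm 1$ on the two one-dimensional chain groups, and the rest goes through as you say.
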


\paragraph{Barcodes.}
A \emph{persistence module} is a sequence vector spaces $\vectorspace_0,\ldots,\vectorspace_m$ and linear maps 
$\linearmap_{i,j}:\vectorspace_i\to\vectorspace_j$ for $i<j$ such that $\linearmap_{i,i}=\id_{\vectorspace_i}$
and $\linearmap_{i,k}=\linearmap_{j,k}\circ\linearmap_{i,j}$ for $i \leq k \leq j$.
As primary example, we obtain a persistence module by applying the homology functor on any simplicial tower.
Persistence modules admit a decomposition into indecomposable summands in the following sense.
Writing $I_{b,d}$ with $b\leq d$ for the persistence module
\[
	\underbrace{
		0 \xrightarrow{0} \ldots \xrightarrow{0} 0
	}_{\text{$b - 1$ times}}
	\xrightarrow{0}
	\underbrace{
		\field \xrightarrow{\id} \ldots \xrightarrow{\id} \field
	}_{\text{$d - b + 1$ times}}
	\xrightarrow{0}
	\underbrace{
		0 \xrightarrow{0} \ldots \xrightarrow{0} 0
	}_{\text{$m-d$ times}},
\]
we can write every persistence module as the direct sum $I_{b_1,d_1}\oplus\ldots\oplus I_{b_s,d_s}$, 
where the direct sum of persistence modules is 
defined component-wise for vector spaces and linear maps in the obvious way. Moreover, 
this decomposition is uniquely defined up to isomorphisms and 
re-ordering, thus the pairs $(b_1,d_1),\ldots,(b_s,d_s)$ are an invariant of the persistence module, 
called its \emph{barcode}. When the persistence 
module was generated by a tower, we also talk about the barcode of the tower. 

\paragraph{Matrix reduction.} 
In this paragraph, we assume that $(\complex_i)_{i=0,\ldots,m}$
is a filtration such that $\complex_0=\emptyset$
and $\complex_{i+1}$ has exactly one more simplex than $\complex_i$.
We label the simplices of $\complex_m$ accordingly as $\sigma_1,\ldots,\sigma_m$,
with $\complex_{i}\setminus\complex_{i-1}=\{\sigma_i\}$.
The filtration can be encoded as a \emph{boundary matrix} $\boundarymatrix$ of dimension $m\times m$,
where the $(ij)$-entry is $1$ if $\sigma_i$ is a facet of $\sigma_j$, and 0 otherwise.
In other words, the $j$-th column of $\boundarymatrix$ encodes the facets of $\sigma_j$, 
and the $i$-th row of $\boundarymatrix$ encodes the cofacets of $\sigma_i$.
Moreover, $\boundarymatrix$ is upper-triangular because every $\complex_i$ is a simplicial complex.
We will sometimes identify rows and columns in $\boundarymatrix$ with the corresponding simplex in $\complex_m$.
Adding the $k$-simplex $\sigma_i$ to $\complex_{i-1}$ either introduces one new homology class (of dimension $k$),
or turns a non-trivial homology class (of dimension $k-1$) trivial.
We call $\sigma_i$ and the $i$-th column of $\boundarymatrix$ 
\emph{positive} or \emph{negative}, respectively (with respect to the given filtration).

For the computation of the barcode, we assume for simplicity homology over the base field $\mathbb{Z}_2$, 
and interpret the coefficients of $\boundarymatrix$ accordingly.
In an arbitrary matrix $\arbmatrix$, a \emph{left-to-right column addition} is an operation of the form
$\arbmatrix_k\gets \arbmatrix_k+\arbmatrix_\ell$ with $\ell<k$,
where $\arbmatrix_k$ and $\arbmatrix_\ell$ are columns of the matrix.
The \emph{pivot} of a non-zero column is the largest non-zero index of the corresponding column.
A non-zero entry is called a pivot if its row is the pivot of the column.
A matrix $\rmatrix$ is called a \emph{reduction} of $\arbmatrix$ if $\rmatrix$ is obtained
by a sequence of left-to-right column additions from $\arbmatrix$
and no two columns in $\rmatrix$ have the same pivot. 
It is well-known that, although $\boundarymatrix$ does not have a unique reduction,
the pivots of all its reductions are the same. Moreover, the pivots 
$(b_1,d_1),\ldots,(b_s,d_s)$ of $\rmatrix$ are precisely the barcode of the filtration.
A direct consequence is that a simplex $\sigma_i$ is positive if and only if the $i$-th
column in $\rmatrix$ is zero.

The standard persistence algorithm processes the columns from left to right.
In the $j$-th iteration, as long as the $j$-th column is not empty and
has a pivot that appears in a previous column, it performs a left-to-right column addition.
In this work, we use a simple improvement of this algorithm that is called \emph{compression}:
before reducing the $j$-th column, it first scans through the non-zero entries of the column. If a row index $i$
corresponds to a negative simplex (i.e., if the $i$-th column is not zero at this point in the algorithm),
the row index can be deleted without changing the pivots of the matrix. 
After this initial scan, the column is reduced in the same way as in the standard
algorithm. See~\cite[\S. 3]{chunk} for a discussion
(we remark that this optimization was also used in~\cite{zc-computing}).

\section{From towers to filtrations} \label{sec:from_towers_to_filtrations}

We phrase now our first result which says that any tower
can be converted into a filtration of only marginally larger size
with a space-efficient streaming algorithm:

\begin{theorem}[Conversion Theorem] \label{thm:main_theorem_1}
	Let
	$
		\tower:\,
		\complex_0	\xrightarrow{\smap_0}	
		\complex_1	\xrightarrow{\smap_1}	
		\ldots		\xrightarrow{\smap_{m-1}}	
		\complex_m
	$
	be a tower where, w.l.o.g., $\complex_0 = \emptyset$ and each $\smap_i$ is either an elementary inclusion
	or an elementary contraction. Let $\towerdim$ denote the dimension
	and $\towerwidth$ the width of the tower, and let $n\leq m$ denote the total number
	of elementary inclusions, and $n_0$ the number of vertex inclusions. Then, there exists a filtration
	$
		\filtration:\,
		\hat{\complex}_0	\xhookrightarrow{}	
		\hat{\complex}_1	\xhookrightarrow{}	
		\ldots			\xhookrightarrow{}	
		\hat{\complex}_m
	$,
	where the inclusions are not necessarily elementary, such that $\tower$ and $\filtration$ have the same barcode
	and the width of the filtration $|\hat{\complex}_m|$
	is at most $O(\towerdim \cdot n\log n_0)$. Moreover, $\filtration$ can be computed from $\tower$
	with a streaming algorithm in $O(\towerdim \cdot |\hat{\complex}_m| \cdot C_\towerwidth)$ time
	and space complexity $O(\towerdim \cdot \towerwidth)$,
	where $C_\towerwidth$ is the cost of an operation in a dictionary with $\towerwidth$ elements.
\end{theorem}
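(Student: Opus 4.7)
The plan is to process the tower one operation at a time while maintaining an ``active'' complex that is homotopy-equivalent to the current $\complex_i$, and to emit only inclusions into $\filtration$. Elementary inclusions are forwarded unchanged (after renaming vertices via a representative table). For an elementary contraction $\contr{u}{v}{i}$ I first decide which of $u,v$ has the smaller closed star in the currently stored complex; call this vertex $w$. I then introduce a fresh vertex $w'$ and emit into $\filtration$ every simplex of $w' \join \cstar(w, \complex_i)$; from that moment on, $w'$ is the representative of both $u$ and $v$. Correctness follows because the enlarged complex collapses onto the complex obtained by identifying $w$ with $w'$: each $\sigma \in \ostar(w, \complex_i)$ is a free face of the newly attached $w' \join \sigma$. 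Applying Lemma~\ref{lem:collapse} repeatedly and checking that the induced inclusion realizes the same vertex map in homology as $\smap_i^\ast$, the filtration $\filtration$ produces a persistence module isomorphic to that of $\tower$, so the barcodes agree.

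The main quantitative step is bounding the total size of $\hat{\complex}_m$. Each contraction contributes $|\cstar(w, \complex_i)| = O(\towerdim \cdot |\ostar(w, \complex_i)|)$ new simplices, so I must amortize the sum of stars of the smaller vertex over all contractions. I would abstract this as a weighted rooted forest whose leaves are the vertices introduced by vertex-inclusions of the tower (weight $1$) and whose internal nodes correspond to contractions, with weight equal to the sum of the children. The rule ``cone the smaller closed star'' corresponds to always charging the lighter subtree, so the standard union-by-weight potential argument on this forest gives that a given simplex can appear in a coned star only $O(\log n_0)$ times before being replaced. Summing over the $n$ elementary inclusions and multiplying by $\towerdim$ for the jump from $\ostar$ to $\cstar$ yields the claimed bound $|\hat{\complex}_m| = O(\towerdim \cdot n \log n_0)$.

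For the streaming implementation I would store the current complex $\complex_i$, with vertices already renamed to their representatives, in a dictionary supporting insertion, deletion, membership and cofacet enumeration; a side table would map each tower vertex to its current representative through path-compressed chains, for amortized constant lookup. An elementary inclusion is renamed, emitted and inserted in $O(\towerdim \cdot C_\towerwidth)$ time. A contraction enumerates $\cstar(w, \complex_i)$ by iterated cofacet queries starting at $w$, emits the simplices of $w' \join \cstar(w, \complex_i)$, deletes the simplices of $\ostar(w, \complex_i)$, and updates the representative table. At any instant the dictionary holds at most $\towerwidth$ simplices, each with $O(\towerdim)$ face references, giving space $O(\towerdim \cdot \towerwidth)$. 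Since each emitted simplex costs $O(\towerdim \cdot C_\towerwidth)$ to handle, the total running time is $O(\towerdim \cdot |\hat{\complex}_m| \cdot C_\towerwidth)$, matching the claim.

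The routine parts of the argument are the homotopy equivalence of the cone attachment, the barcode invariance that follows from it, and the dictionary bookkeeping. The principal obstacle I expect is making the $\log n_0$ factor rigorous: the naive union-by-weight analogy is phrased over vertices, whereas I must charge coning costs to \emph{simplices} that happen to contain a contracted vertex, and these simplices are created and destroyed asynchronously with respect to the contractions that affect them. Formalizing this via the path-decomposition on weighted forests announced in the introduction, and proving that this abstraction faithfully accounts for every emitted simplex of $\hat{\complex}_m$, is the technical heart of the proof.
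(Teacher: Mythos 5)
Your overall architecture (choose the vertex with the smaller star, cone over its closed star, abstract the costs to a weighted contracting forest, maintain only the current complex in a dictionary) is the paper's, but the specific coning step you propose is broken. You introduce a \emph{fresh} vertex $w'$ and add only $w'\join\cstar(w,\complex_i)$, where $w$ is the vertex with the \emph{smaller} star, and then declare $w'$ the representative of both $u$ and $v$. The resulting complex does not contain a copy of $\complex_{i+1}$: a simplex of $\complex_{i+1}$ arising from the \emph{larger} star (say $\{v,a,b\}$ with $v\neq w$, which should become $\{w',a,b\}$ after renaming) is never created, since $\{v,a,b\}\notin\cstar(w,\complex_i)$ in general. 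So the active subcomplex is not isomorphic to $\complex_{i+1}$, the renaming table is inconsistent with subsequent inclusions and contractions, and the contiguity/commutativity argument needed for barcode equality cannot be run. The correct move, and what the paper does, is to cone the \emph{surviving} vertex (the one whose active star is larger) over the active closed star of the other vertex, so that the survivor accumulates both stars; no fresh vertex is introduced. Your collapse claim is also false as stated: $\sigma\in\ostar(w,\complex_i)$ typically has many proper cofaces in $\complex_i$ and is not a free face of $w'\join\sigma$; the collapse has to be organized in decreasing dimension, pairing each simplex containing the deactivated vertex with its join with the survivor and checking freeness at each stage.

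The error also propagates into the size analysis. Because the paper cones onto an \emph{existing} vertex, the simplices common to both closed stars contribute nothing new, and the cost of a contraction is bounded by $2\,|E(\nodechildone)\setminus E(\nodechildtwo)|$, where $E(\cdot)$ counts \emph{origin simplices} (not vertices) below a forest node; this set difference is exactly what makes the telescoping along an ascending path work ($c(p_i)\leq 2|E(\node_i)|$ via $|E(\node)|\geq|E(\nodechildone)|+|E(\nodechildtwo)\setminus E(\nodechildone)|$), and the $\log n_0$ factor then comes from a pruning of the forest into $O(\log n_0)$ independent families of ascending paths, not from the textbook union-by-weight potential on vertex counts. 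With your fresh vertex the cost is the full $|\cstar(w,\complex_i)|$, the charge lands in $E(\nodechildone)$ rather than $E(\nodechildone)\setminus E(\nodechildtwo)$, and the telescoping fails because the children's $E$-sets overlap. You correctly identify this amortization as the technical heart, but as sketched it is both unproved and, with your construction, not salvageable without first repairing the coning step.
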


The remainder of the section is organized as follows. We define $\filtration$ in Section~\ref{ssec:active_and_small_coning}
and prove that it yields the same barcode as $\tower$ in Section~\ref{ssec:topological_equivalence}.
In Section~\ref{ssec:size_analysis}, we prove the upper bound on the width of the filtration.
In Section~\ref{ssec:algorithm_for_conversion}, we explain the algorithm to compute $\filtration$
and analyze its time and space complexity.

\subsection{Active and small coning} \label{ssec:active_and_small_coning}

\paragraph{Coning.}
We briefly revisit the \emph{coning strategy} introduced by Dey, Fan and Wang~\cite{dfw-computing}. 
Let $\smap:\complex \to \othercomplex$ be an elementary contraction $\contr{u}{v}{i}$ and define
\[
	\othercomplex^\ast = \complex \cup \left( u \join \cstar(v,\complex) \right).
\]
An example is shown in Figure~\ref{fig:coning_strategy}. 

\begin{figure}[h]
	\centering

	\includegraphics[width=129mm]{./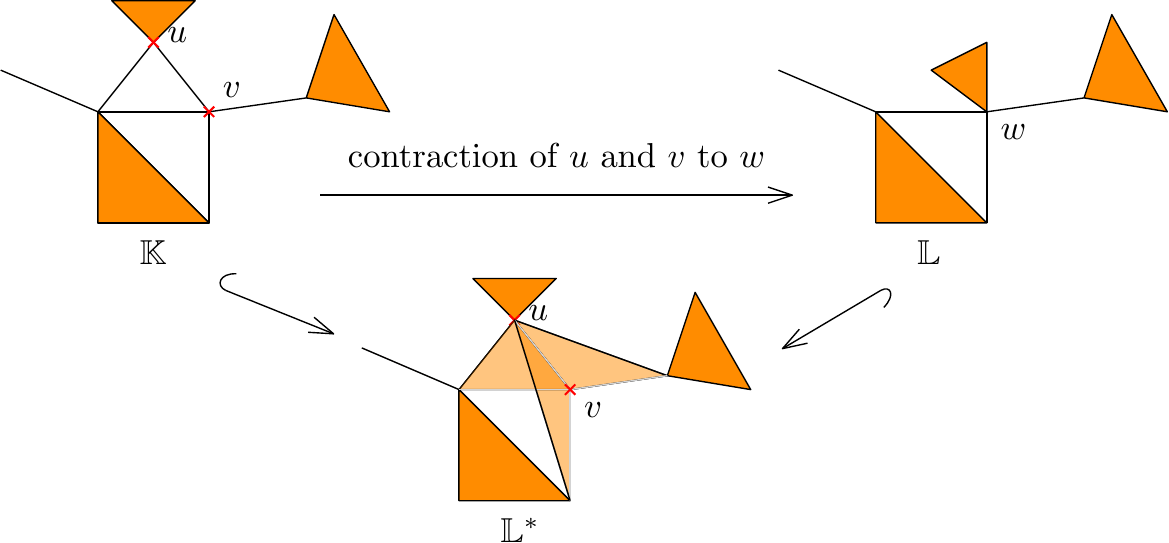}

	\caption{Construction example of $\othercomplex^\ast$, were $u$ and $v$ in $\complex$ are contracted to $w$ in $\othercomplex$} 
	\label{fig:coning_strategy}
\end{figure}

Dey et al.\ show that $\othercomplex \subseteq \othercomplex^\ast$ and 
that the map induced by inclusion is an isomorphism 
between $H(\othercomplex)$ and $H(\othercomplex^\ast)$.
By applying this result at any elementary contraction,
this implies that every zigzag tower can be transformed
into a zigzag filtration with identical barcode.

Given a tower $\tower$, we can also obtain an non-zigzag filtration using coning, if we continue the operation on $\othercomplex^\ast$ instead 
of going back to $\othercomplex$. More precisely,
we set $\tilde{\complex}_0 := \complex_0$ and if $\smap_i$ is an inclusion of simplex $\sigma$,
we set $\tilde{\complex}_{i+1}:=\tilde{\complex}_i\cup\{\sigma\}$. If $\smap_i$ is a contraction
$\contr{u}{v}{i}$, we set $\tilde{\complex}_{i+1} = \tilde{\complex}_i \cup \left( u \join \cstar(v,\tilde{\complex}_i) \right)$.
Indeed, it can be proved that $(\tilde{\complex}_i)_{i=0,\ldots,m}$ has the same barcode as $\tower$. 
However, the filtration will not be small,
and we will define a smaller variant now.

\smallskip

Our new construction yields a sequence of complexes $\hat{\complex}_0,\ldots,\hat{\complex}_{m}$ with 
$\hat{\complex}_i\subseteq\hat{\complex}_{i+1}$.
During the construction, we maintain a flag for each vertex in $\hat{\complex}_i$, which marks the vertex as \emph{active} or \emph{inactive}.
A simplex is called \emph{active} if all its vertices are active, and \emph{inactive} otherwise.
For a vertex $u$ and a complex $\hat{\complex}_i$, let $\activestar(u,\hat{\complex}_i)$ denote its \emph{active closed star}, 
which is the set of active simplices in $\hat{\complex}_i$ in the closed star of $u$.

The construction is inductive, starting with $\hat{\complex}_0 := \emptyset$.
If $\complex_i \stackrel{\smap_i}{\to} \complex_{i+1}$ is an elementary inclusion with $\complex_{i+1} = \complex_i \cup \{\sigma\}$, 
set $\hat{\complex}_{i+1} := \hat{\complex}_i \cup \{\sigma\}$.
If $\sigma$ is a vertex, we mark it as active. 
It remains the case that $\complex_i \stackrel{\smap_i}{\to} \complex_{i+1}$ is an elementary contraction of the vertices $u$ and 
$v$.
If $|\activestar(u,\hat{\complex}_i)| \leq |\activestar(v,\hat{\complex}_i)|$, we set 
\[
	\hat{\complex}_{i+1} = \hat{\complex}_i \cup \left(v\join\activestar(u,\hat{\complex}_i)\right)
\] 
and mark $u$ as inactive. Otherwise, we set
\[
	\hat{\complex}_{i+1} = \hat{\complex}_i \cup \left( u\join \activestar(v,\hat{\complex}_i) \right)
\] 
and mark $v$ as inactive. This ends the description of the construction. We write $\filtration$ for the filtration 
$(\hat{\complex}_i)_{i=0,\ldots,m}$.

There are two major changes compared to the construction of $(\tilde{\complex}_i)_{i=0,\ldots,m}$. 
First, to counteract the potentially large growth of the involved cones, we restrict 
coning to active simplices. We will show below that the subcomplex of $\hat{\complex}_i$ induced by the active vertices is isomorphic to 
$\complex_i$. As a consequence, we add the same number of simplices by passing from $\hat{\complex}_i$ to $\hat{\complex}_{i+1}$ 
as in the approach by Dey et al.\ does when passing from $\complex$ to $\othercomplex^\ast$.

A second difference is that our strategy exploits that an elementary contraction of two vertices $u$ and $v$ leaves us with a choice: 
we can either take $u$ or $v$ as the representative of the contracted vertex. In terms of simplicial maps, 
these two choices correspond to setting $\smap_i(u) = \smap_i(v) = u$ or $\smap_i(u) = \smap_i(v) = v$, 
if $\smap_i$ is the elementary contraction of $u$ and $v$. It is obvious that both choices yield identical complexes $\complex_{i+1}$ 
up to renaming of vertices. However, the choices make a difference in terms of the size of $\hat{\complex}_{i+1}$, 
because the  active closed star of $u$ to $v$ in $\hat{\complex}_i$ might differ in size. Our construction simply choose the 
representative which causes the smaller $\hat{\complex}_{i+1}$.

\subsection{Topological equivalence} \label{ssec:topological_equivalence}

We make the following simplifying assumption for $\tower$. Let $\smap_i$ be an elementary contraction of $u$ and $v$. If our construction 
of $\hat{\complex}_{i+1}$ turns $v$ inactive, we assume that $\smap_i(u) = \smap_i(v) = u$. 
Otherwise, we assume $\smap_i(u) = \smap_i(v) = v$. 
Indeed, this is without loss of generality because it corresponds to a renaming of the simplices in each $\complex_i$ and yields equivalent 
persistence modules. The advantage of this convention is the following property, which follows from a straight-forward inductive argument.

\begin{lemma}\label{lem:active_vertices}
	For every $i$ in $\{0,...,m\}$, the set of vertices of $\complex_i$ is equal to the set of active vertices in $\hat{\complex}_i$.
\end{lemma}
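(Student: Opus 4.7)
The proof is a straightforward induction on $i$, made possible precisely by the naming convention adopted just before the lemma's statement. I will denote by $V_i$ the vertex set of $\complex_i$ and by $A_i$ the set of active vertices of $\hat{\complex}_i$; the goal is $V_i = A_i$.

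For the base case $i=0$, we have $\complex_0 = \hat{\complex}_0 = \emptyset$, so both sets are empty. For the inductive step, assume $V_i = A_i$ and consider the three possibilities for $\smap_i$. If $\smap_i$ is an elementary inclusion of a simplex $\sigma$ that is not a vertex, then neither $V_i$ nor $A_i$ changes in the transition to $i+1$, so the claim holds. If $\sigma$ is a vertex, then $V_{i+1} = V_i \cup \{\sigma\}$; on the filtration side, $\sigma$ is added to $\hat{\complex}_{i+1}$ and explicitly marked active by the construction, so $A_{i+1} = A_i \cup \{\sigma\}$, and we conclude by the inductive hypothesis.

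The only interesting case is when $\smap_i$ is an elementary contraction $\contr{u}{v}{i}$. Regardless of which vertex is chosen as representative in $\complex_{i+1}$, the underlying set $V_{i+1}$ has cardinality $|V_i| - 1$: one of $u$, $v$ disappears. By the naming convention, if our construction marks $v$ as inactive when forming $\hat{\complex}_{i+1}$, then $\smap_i$ sends both $u$ and $v$ to $u$, so $V_{i+1} = V_i \setminus \{v\}$; correspondingly $A_{i+1} = A_i \setminus \{v\}$, since no other vertex's active status is altered and no new vertex is introduced by the cone construction (a cone $v \join \activestar(u,\hat{\complex}_i)$ only produces simplices using the already-present vertices $u$, $v$, and the vertices of $\activestar(u,\hat{\complex}_i)$). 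The symmetric sub-case, where $u$ is marked inactive and $\smap_i(u)=\smap_i(v)=v$, is handled identically with the roles of $u$ and $v$ exchanged. In either sub-case $V_{i+1}=A_{i+1}$.

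The only real content of the argument is the observation that the cone operation never creates a new vertex but only new higher-dimensional simplices incident to $u$ or $v$, so that the active vertex set is modified exactly by flipping the status of one of $u$, $v$; this is what makes the convention align perfectly with the construction. Everything else is routine bookkeeping, so I do not expect any serious obstacle.
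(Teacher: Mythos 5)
Your proof is correct and is exactly the "straight-forward inductive argument" that the paper invokes without writing out: the paper omits the proof entirely, and your case analysis (non-vertex inclusion, vertex inclusion, contraction with the naming convention aligning the removed vertex with the deactivated one, plus the observation that coning introduces no new vertices) fills it in as intended.
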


This allows us to interpret $\complex_i$ and $\hat{\complex}_i$ as simplicial complexes defined over a common vertex set. 
In fact, $\complex_i$ is the subcomplex of $\hat{\complex}_i$ induced by the active vertices:

\begin{lemma} \label{lem1}
	A simplex $\sigma$ is in $\complex_i$ if and only if $\sigma$ is an active simplex in $\hat{\complex}_i$.
\end{lemma}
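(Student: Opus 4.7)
I will prove the statement by induction on $i$, piggybacking on Lemma~\ref{lem:active_vertices} which already pins down the active vertex set. The base case $i = 0$ is trivial since both $\complex_0$ and $\hat{\complex}_0$ are empty. For the inductive step, I split on the two possible forms of $\smap_i$: an elementary inclusion, or an elementary contraction. The inclusion case is essentially bookkeeping: if $\smap_i$ adds a simplex $\sigma$, then the same $\sigma$ is adjoined to both $\complex_i$ and $\hat{\complex}_i$, and all its vertices are active in $\hat{\complex}_{i+1}$ (either by inductive hypothesis if $\sigma$ is not a vertex, or directly by construction if it is).

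The contraction case is the core. Suppose $\smap_i$ is the elementary contraction $\contr{u}{v}{i}$; by symmetry and by the convention preceding Lemma~\ref{lem:active_vertices}, assume that the construction turns $v$ inactive, so that $\smap_i(u) = \smap_i(v) = u$ and $\hat{\complex}_{i+1} = \hat{\complex}_i \cup \bigl( u \join \activestar(v,\hat{\complex}_i) \bigr)$. For the forward direction, take $\sigma \in \complex_{i+1}$ and pick a preimage $\tau \in \complex_i$ with $\smap_i(\tau) = \sigma$. If $v \notin \tau$, then $\sigma = \tau$, which by induction is active in $\hat{\complex}_i$ and remains active in $\hat{\complex}_{i+1}$ (its vertices are unaffected by deactivating $v$). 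If $v \in \tau$, then $\sigma = (\tau \setminus \{v\}) \cup \{u\}$, so $\sigma \setminus \{u\} \subseteq \tau$; since $\tau$ is active in $\hat{\complex}_i$ and contains $v$, the face $\sigma \setminus \{u\}$ lies in $\cstar(v,\hat{\complex}_i)$ and is active, hence in $\activestar(v,\hat{\complex}_i)$. Thus $\sigma = u \join (\sigma \setminus \{u\})$ is added to $\hat{\complex}_{i+1}$ and is active (no vertex of $\sigma$ equals $v$).

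For the reverse direction, let $\sigma$ be active in $\hat{\complex}_{i+1}$, so in particular $v \notin \sigma$. Either $\sigma \in \hat{\complex}_i$, in which case $\sigma$ was already active in $\hat{\complex}_i$ (no vertex of $\sigma$ was toggled) and inductively lies in $\complex_i \subseteq \complex_{i+1}$ via $\smap_i|_{\sigma} = \id$; or $\sigma$ is one of the newly adjoined simplices, meaning $\sigma = u \join \tau$ with $\tau \in \activestar(v,\hat{\complex}_i)$ and $u \notin \tau$. Since $\sigma$ is active, $v \notin \tau$, so $\tau$ lies in the closed star of $v$ without containing $v$; hence $\tau \cup \{v\}$ is a simplex of $\hat{\complex}_i$, and it is active because all its vertices are. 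By induction $\tau \cup \{v\} \in \complex_i$, and applying $\smap_i$ yields $\sigma = \tau \cup \{u\} \in \complex_{i+1}$.

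\textbf{Main obstacle.} The only subtlety is tracking the interaction between the active/inactive flag and the coface/face relation: one must verify that the newly adjoined simplex $u \join \tau$ has all vertices active precisely when $v \notin \tau$, and one must also be careful with the corner case $u \in \tau$ (in which the ``join'' in the set-theoretic sense degenerates), for which one falls back to the inductive hypothesis on $\hat{\complex}_i$. Beyond these bookkeeping points the argument is a direct unwinding of the definitions and an application of Lemma~\ref{lem:active_vertices}.
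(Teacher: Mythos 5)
Your proof is correct and follows essentially the same route as the paper's: induction on $i$ via Lemma~\ref{lem:active_vertices}, with the contraction case handled by matching a simplex of $\complex_{i+1}\setminus\complex_i$ with a face of its $\smap_i$-preimage lying in $\activestar(v,\hat{\complex}_i)$, and conversely recovering a simplex of $\complex_i$ from a newly adjoined $u\join\tau$ by restoring the vertex $v$. The only difference is cosmetic — the paper phrases the reverse direction contrapositively (showing every $\sigma\in\hat{\complex}_{i+1}\setminus\complex_{i+1}$ is inactive) — and your explicit treatment of the degenerate case $u\in\tau$ is a point the paper glosses over.
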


\begin{proof}
	We use induction on $i$. The statement is true for $i=0$, because $\complex_0=\emptyset=\hat{\complex}_0$.
	So assume first $\smap_i: \complex_i \to \complex_{i+1}$ is an elementary inclusion that adds a $d$-simplex $\sigma=(v_0,\ldots,v_d)$ to 
	$\complex_{i+1}$.
	If $\sigma$ is a vertex, it is set active in $\hat{\complex}_{i+1}$ by construction. Otherwise, $v_0,\ldots,v_d$ are active
	in $\hat{\complex}_i$ by induction and stay active in $\hat{\complex}_{i+1}$. In any case, $\sigma$ is active in $\hat{\complex}_{i+1}$.
	The equivalence for the remaining simplices is straight-forward.
	
	If $\smap_i$ is an elementary contraction $\contr{u}{v}{i}$, we prove both directions of the equivalence separately. For ``$\Rightarrow$'', 
	fix a $d$-simplex $\sigma \in \complex_{i+1}$. It suffices to show that $\sigma \in \hat{\complex}_{i+1}$, as in this case, it is also 
	active by Lemma~\ref{lem:active_vertices}. If $\sigma \in \complex_{i}$, this follows at once by induction because 
	$\complex_i \subseteq \hat{\complex}_i \subseteq \hat{\complex}_{i+1}$. If $\sigma \notin \complex_{i}$, $u$ must be a vertex of $\sigma$. 
	Moreover, writing $\sigma = \{u, v_1, ..., v_d\}$ and $\sigma' = \{v,v_1,\ldots,v_d\}$ we have that $\sigma' \in \complex_i$ and 
	$\smap_i(\sigma') = \sigma$. In particular, the vertices $v_1,\ldots,v_d$ are active in $\hat{\complex}_i$ by induction, hence 
	$\{v_1,\ldots,v_d\}$ is in the active closed star of $v$ in $\hat{\complex}_i$. By construction, $\{u, v_1, ..., v_d\}=\sigma$ is in 
	$\hat{\complex}_{i+1}$.
	
	For ``$\Leftarrow$'', let $\sigma \in \hat{\complex}_{i+1}\setminus \complex_{i+1}$. We show that $\sigma$ is an inactive simplex in 
	$\hat{\complex}_{i+1}$. By Lemma~\ref{lem:active_vertices}, this is equivalent to show that $\sigma$ contains a vertex not in 
	$\complex_{i+1}$.
	
	Case 1: $\sigma \in \hat{\complex}_{i}$. 
	If $\sigma$ is inactive in $\hat{\complex}_{i}$, it stays inactive in $\hat{\complex}_{i+1}$. So, assume that $\sigma$ is active in 
	$\hat{\complex}_i$ and thus $\sigma \in \complex_i$ by induction. But $\sigma \notin \complex_{i+1}$, so $\sigma$ must have $v$ as a vertex 
	and is therefore inactive in $\hat{\complex}_{i+1}$.
	
	Case 2: $\sigma\in\hat{\complex}_{i+1}\setminus\hat{\complex}_i$. 
	By construction, $\sigma$ is of the form $\{u,v_1,\ldots,v_d\}$ such that $\{v_1,\ldots,v_d\}$ is in the active closed star of $v$ in 
	$\hat{\complex}_i$. Assume for a contradiction that $v\neq v_j$ for all $j=1,\ldots,d$. Then, $\sigma'=\{v,v_1,\ldots,v_d\}$ is active in 
	$\hat{\complex}_i$ and thus, by induction, a simplex in $\complex_i$. But then, $\smap_i(\sigma')=\sigma\in \complex_{i+1}$ which is a 
	contradiction to our choice of $\sigma$. It follows that $v$ is a vertex of $\sigma$ which proves our claim.
\end{proof}

\begin{lemma} \label{lem3}
	For every $0 \leq i \leq m$, the complex $\hat{\complex}_i$ collapses to $\complex_i$.
\end{lemma}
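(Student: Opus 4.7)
The plan is induction on $i$, with the trivial base case $\hat{\complex}_0 = \emptyset = \complex_0$.

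For the inductive step, first suppose $\smap_i$ is the elementary inclusion of a simplex $\sigma$, so $\complex_{i+1} = \complex_i \cup \{\sigma\}$ and $\hat{\complex}_{i+1} = \hat{\complex}_i \cup \{\sigma\}$. The inductive collapse sequence from $\hat{\complex}_i$ to $\complex_i$ involves only inactive simplices by Lemma~\ref{lem1}, while $\sigma$ is active (again by Lemma~\ref{lem1}). Hence $\sigma$ cannot be a proper coface of any removed simplex---an inactive vertex present in a removed face cannot lie in $\sigma$---so the same collapse sequence applies verbatim in $\hat{\complex}_{i+1}$ and terminates at $\complex_{i+1}$.

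Now suppose $\smap_i$ is an elementary contraction. By the convention of Section~\ref{ssec:topological_equivalence} we may assume $\smap_i = \contr{u}{v}{i}$ with $v$ the vertex made inactive, so that $\hat{\complex}_{i+1} = \hat{\complex}_i \cup (u \join \activestar(v, \hat{\complex}_i))$. Since $v$ is still active in $\hat{\complex}_i$, Lemma~\ref{lem1} identifies $\activestar(v, \hat{\complex}_i)$ with $\cstar(v, \complex_i)$. I would then split the collapse into two stages. In the first, I reapply the inductive collapse from $\hat{\complex}_i$ to $\complex_i$ inside $\hat{\complex}_{i+1}$; it is still valid because each newly added simplex $u \join \alpha$ involves only vertices active in $\hat{\complex}_i$ and therefore cannot be a coface of any inactive simplex being removed. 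The remaining complex at this point is $\complex_i \cup (u \join \cstar(v, \complex_i))$. In the second stage, for each $\sigma \in \ostar(v, \complex_i)$ with $u \notin \sigma$, I pair $\sigma$ with $\sigma \cup \{u\}$ and process the pairs in order of decreasing $\dim \sigma$. A direct comparison of sets then shows that the surviving simplices form $(\complex_i \setminus \ostar(v, \complex_i)) \cup \{u \join \alpha \mid \alpha \in \link(v, \complex_i),\ u \notin \alpha\}$, which coincides with $\complex_{i+1}$ via $\smap_i(u) = \smap_i(v) = u$.

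The main obstacle is verifying the free-face condition in the second stage: when $(\sigma, \sigma \cup \{u\})$ is about to be collapsed, no other proper coface of $\sigma$ may remain. Any such coface $\sigma'$ still present must contain $v$ (hence lies in $\complex_i$ or in $u \join \cstar(v, \complex_i)$) and strictly contains $\sigma$. A short case analysis based on whether $u \in \sigma'$ shows $\sigma'$ has been paired---as the upper or lower side of our pairing---with a simplex of strictly larger dimension than $\sigma$, and so has been removed in an earlier step of the decreasing-dimension order. Once this is established, the collapse is valid and Lemma~\ref{lem3} follows.
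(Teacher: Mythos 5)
Your proof is correct and follows essentially the same route as the paper's: induction on $i$, carrying the inductive collapse sequence over into $\hat{\complex}_{i+1}$ (justified exactly as you do, by noting that the newly added cone simplices have only active vertices and so cannot be cofaces of the inactive simplices being collapsed), and then eliminating the remaining $v$-bearing simplices via the pairing $\sigma \leftrightarrow \sigma \cup \{u\}$ processed in decreasing dimension, with the same free-face verification. The only cosmetic difference is that you describe the leftover set concretely as $\ostar(v,\complex_i) \cup \bigl(u \join \ostar(v,\complex_i)\bigr)$ rather than as the paper's $S = S_u \cup S_{\neg u}$; the argument is otherwise identical and has no gaps.
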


\begin{proof}
	We use induction on $i$. For $i = 0$, $\complex_0 = \hat{\complex}_0$, and the statement is trivial. Suppose that the statement holds for 
	$\hat{\complex}_i$ and $\complex_i$ using the sequence $s_i$ of elementary collapses. Note that these collapses only concern inactive 
	simplices in $\hat{\complex}_i$. For an inactive vertex $v \in \hat{\complex}_i$, the construction of $\hat{\complex}_{i+1}$ ensures that 
	$v$ does not gain any additional coface. This implies that $s_i$ is still a sequence of elementary collapses for $\hat{\complex}_{i+1}$, 
	yielding a complex $\hat{\complex}^*_{i+1}$ with $\complex_{i+1} \subseteq \hat{\complex}^*_{i+1} \subseteq \hat{\complex}_{i+1}$. In 
	particular, $\hat{\complex}^*_{i+1}$ only contains vertices that are still active in $\hat{\complex}_i$. If $\smap_i$ is an elementary 
	inclusion, $\hat{\complex}^*_{i+1} = \complex_{i+1}$, because any all vertices in $\hat{\complex}_i$ remain active in 
	$\hat{\complex}_{i+1}$. For $\smap_i$ being an elementary contraction $\contr{u}{v}{i}$, set 
	$S := \hat{\complex}^*_{i+1} \setminus \complex_{i+1}$ as the remaining set of simplices that still need to be collapsed to obtain 
	$\complex_{i+1}$. All simplices of $S$ have $v$ as vertex. More precisely, $S$ is the set of all simplices of the form 
	$\{v,v_1,\ldots,v_d\}$ with $v_1,\ldots,v_d$ active in $\hat{\complex}_{i+1}$. We split $S = S_u \cup S_{\neg u}$ where $S_u \subset S$ are 
	the simplices in $S$ that contain $u$ as vertex, and $S_{\neg u} = S \setminus S_u$.
	
	We claim that the mapping that sends $\{u,v,v_1,\ldots,v_d\} \in S_u$ to $\{v,v_1,\ldots,v_d\} \in S_{\neg u}$ is a bijection. This map is 
	clearly injective. If $\sigma = \{v,v_1,\ldots,v_d\} \in S_{\neg u}$, then $\sigma \in \hat{\complex}_i$ (because every newly added simplex 
	in $\hat{\complex}_{i+1}$ contains $u$). Also, $\sigma \in \hat{\complex}^*_{i+1}$, and is therefore active in $\hat{\complex}_i$. By 
	construction, $\{u,v,v_1,\ldots,v_d\} \in \hat{\complex}_{i+1}$, proving surjectivity.
	
	We now define a sequence of elementary collapses from $\hat{\complex}^*_{i+1}$ to $\complex_{i+1}$. Choose a simplex 
	$\sigma = \{v,v_1,\ldots,v_d\} \in S_{\neg u}$ of maximal dimension, and let $\tau = \{u, v, v_1, \ldots, v_d\}$ denote the corresponding 
	simplex in $S_u$. Then $\sigma$ is indeed a free face in $\hat{\complex}^*_{i+1}$, because if there was another coface $\tau' \neq \tau$, it 
	takes the form $\{w,v,v_1,\ldots,v_d\}$ with $w \neq u$ active. So, $\tau' \in S_{\neg u}$, and $\tau'$ has larger dimension than $\sigma$, 
	a contradiction. Therefore, the pair $(\sigma,\tau)$ defines an elementary collapse in $\hat{\complex}^*_{i+1}$. We proceed with this 
	construction, always collapsing a remaining pair in $S_{\neg u} \times S_u$ of maximal dimension, until all elements of $S$ have been 
	collapsed.
\end{proof}

\begin{proposition} \label{prop:barcode_equiv}
	$\tower$ and $\filtration$ have the same barcode.
\end{proposition}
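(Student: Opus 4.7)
The plan is to exhibit an isomorphism of persistence modules between $H_p(\tower)$ and $H_p(\filtration)$ for every $p$, from which equality of barcodes follows by uniqueness of the interval decomposition. Concretely, I want to build a commutative ladder whose vertical arrows are isomorphisms at each index.

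First I would set up the vertical maps. By Lemma~\ref{lem1}, $\complex_i$ is precisely the subcomplex of $\hat{\complex}_i$ induced by the active vertices, so the identification yields a canonical inclusion $\iota_i:\complex_i\hookrightarrow\hat{\complex}_i$. By Lemma~\ref{lem3}, $\hat{\complex}_i$ collapses to $\complex_i$, and Lemma~\ref{lem:collapse} then tells us $(\iota_i)_\ast$ is an isomorphism on $H_p$ for every $p$ and every $i$.

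The main step is showing that each ladder square commutes on homology, i.e., that $\iota_{i+1}\circ\smap_i$ and the inclusion $\hat{\complex}_i\hookrightarrow\hat{\complex}_{i+1}$ composed with $\iota_i$ induce the same map $H_p(\complex_i)\to H_p(\hat{\complex}_{i+1})$. If $\smap_i$ is an elementary inclusion the square commutes at the simplicial level, so there is nothing to do. The interesting case is an elementary contraction $\contr{u}{v}{i}$, where our convention fixes $\smap_i(v)=u$ exactly when $v$ is marked inactive. The two compositions then agree on every simplex not containing $v$; where they differ I would show that they are \emph{contiguous} as simplicial maps into $\hat{\complex}_{i+1}$, and then invoke the classical fact that contiguous simplicial maps induce equal maps on homology. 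Concretely, for a simplex $\sigma=\{v,v_1,\ldots,v_d\}\in\complex_i$, Lemma~\ref{lem1} gives that $\sigma$ is active in $\hat{\complex}_i$ and hence lies in $\activestar(v,\hat{\complex}_i)$, so the construction of $\hat{\complex}_{i+1}$ explicitly adds $u\join\sigma=\sigma\cup\smap_i(\sigma)$ to $\hat{\complex}_{i+1}$, which is exactly the contiguity condition. The symmetric case in which $u$ is marked inactive is handled identically after swapping the roles of $u$ and $v$.

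Combining these ingredients yields an isomorphism of persistence modules $H_p(\tower)\cong H_p(\filtration)$ for every $p$, and uniqueness of the barcode decomposition finishes the proof. I expect the contiguity verification to be the main conceptual step, since it is the only place where the particular form of the coning construction intervenes; everything else is bookkeeping and a direct appeal to the three lemmas already at hand.
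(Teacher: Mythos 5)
Your proposal is correct and matches the paper's own argument essentially step for step: the vertical isomorphisms come from Lemma~\ref{lem3} combined with Lemma~\ref{lem:collapse}, the squares are shown to commute by verifying that $\inc_{i+1}\circ\smap_i$ and $\hat{\smap}_i\circ\inc_i$ are contiguous (with the same key observation that $\{u,v,v_1,\ldots,v_d\}\in\hat{\complex}_{i+1}$ because $\sigma$ lies in the active closed star of $v$ by Lemma~\ref{lem1}), and the conclusion follows from the Persistence Equivalence Theorem. No gaps.
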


\begin{proof}
	Let $\hat{\smap}_i: \hat{\complex}_i \to \hat{\complex}_{i+1}$ denote the inclusion map from $\hat{\complex}_i$ to $\hat{\complex}_{i+1}$. 
	By combining Lemma~\ref{lem3} with Lemma~\ref{lem:collapse}, we have an isomorphism $\inc_i^*: H(\complex_i) \to H(\hat{\complex}_i)$, for 
	all $0 \leq i \leq m$, induced by the inclusion maps $\inc_i: \complex_i \to \hat{\complex}_i$, and therefore the following diagram 
	connecting the persistence modules induced by $\tower$ and $\filtration$:
	\begin{equation}\label{dgmH}
		\begin{CD}
			H(\complex_0)		@>\smap^\ast_0>>	H(\complex_1)		@>\smap^\ast_1>>
				...	@>\smap^\ast_{m-1}>>	H(\complex_m)\\
			@VV\inc_0^*V					@VV\inc_1^*V
				@.				@VV\inc_m^*V\\
			H(\hat{\complex}_0)	@>\hat{\smap}^*_0>>	H(\hat{\complex}_1)	@>\hat{\smap}^*_1>>
				...	@>\hat{\smap}^*_{m-1}>>	H(\hat{\complex}_m)
		\end{CD}
	\end{equation}
	The \emph{Persistence Equivalence Theorem}~\cite[p.159]{edelsbrunnerharer} 
        asserts that $(\complex_j)_{j}$ and $(\hat{\complex}_j)_{j}$, with $j = 0, ..., m$, have the same barcode 
        if \eqref{dgmH} commutes, that is, if $\inc_{i+1}^* \circ \smap_i^* = \hat{\smap}_i^* \circ \inc_i^*$, for all $0 \leq i < m$.
	
	Two simplicial maps $\smap: \complex \to \othercomplex$ and $\othersmap: \complex\to\othercomplex$ are \emph{contiguous} if, for all 
	$\sigma \in \complex$, $\smap(\sigma) \cup \othersmap(\sigma)\in\othercomplex$. Two contiguous maps are known to be homotopic~\cite[Theorem 
	12.5.]{munkres} and thus equal at homology level, that is, $\smap^\ast = \othersmap^\ast$. We show that $\inc_{i+1} \circ \smap_i$  and 
	$\hat{\smap}_i \circ \inc_i$ are contiguous. This implies that \eqref{dgmH} commutes, because, by functoriality, 
	$\inc_{i+1}^* \circ \smap_i^* = (\inc_{i+1} \circ \smap_i)^* = (\hat{\smap}_i \circ \inc_i)^* = \hat{\smap}_i^* \circ \inc_i^*$.
	
	To show contiguity, fix $\sigma \in \complex_i$ and observe that $(\hat{\smap}_i \circ \inc_i)(\sigma) = \sigma$ because $\hat{\smap}_i$ 
	and $\inc_i$ are inclusions. If $\smap_i(\sigma) = \sigma$, $(\inc_{i+1} \circ \smap_i)(\sigma) = \sigma$ as well, and the contiguity 
	condition is clearly satisfied. So, let $\smap_i(\sigma) \neq \sigma$. Then $\smap_i$ is an elementary contraction $\contr{u}{v}{i}$, and 
	$\sigma$ is of the form $\{v,v_1,\ldots,v_d\}$, where one of the $v_j$ might be equal to $u$. Then, 
	$(\inc_{i+1} \circ \smap_i)(\sigma) = \{u, v_1, ..., v_d\}$. Consequently, 
	$(\inc_{i+1} \circ \smap_i)(\sigma) \cup (\hat{\smap}_i \circ \inc_i)(\sigma) = \{u, v, v_1, ..., v_d\}$. By Lemma~\ref{lem1}, 
	$\sigma = \{v,v_1,\ldots,v_d\}$ is in the active closed star of $v$ in $\hat{\complex}_i$, and by construction 
	$\{u, v, v_1, ..., v_d\} \in \hat{\complex}_{i+1}$, which proves contiguity of the maps.
\end{proof}

\subsection{Size analysis}\label{ssec:size_analysis}

\paragraph{The contracting forest.}
We associate a rooted labeled forest $\forest_j$ to a prefix 
$\emptyset = \complex_0 \xrightarrow{\smap_0} \ldots \xrightarrow{\smap_{j-1}} \complex_j$ of $\tower$ inductively as follows: 
For $j = 0$, $\forest_0$ is the empty forest. 
Let $\forest_{j-1}$ be the forest of $\complex_0 \rightarrow \ldots \rightarrow \complex_{j-1}$.
If $\smap_{j-1}$ is an elementary inclusion of a $d$-simplex, we have two cases: if $d > 0$, set $\forest_j:=\forest_{j-1}$. 
If a vertex $v$ is included, $\forest_j:=\forest_{j-1} \cup \{\node\}$, with $\node$ a single node tree labeled with $v$.
If $\smap_{j-1}$ is an elementary contraction contracting two vertices $u$ and $v$ in $\complex_{j-1}$, 
there are two trees in $\forest_{j-1}$, whose roots are labeled $u$ and $v$. In $\forest_j$, these two trees are merged 
by making their roots children of a new root, which is labeled with the vertex that $u$ and $v$ are mapped to. 

We can read off from the construction immediately that the roots of $\forest_i$ are labeled with the vertices of the complex $\complex_i$, 
for every $i=0,\ldots,m$. 
Moreover, each leaf corresponds to the inclusion of a vertex in $\complex_0 \rightarrow \ldots \rightarrow \complex_i$, and each internal node 
corresponds to a contraction of two vertices. In particular, $\forest_i$ is a \emph{full} forest, that is, every node has $0$ or $2$ children.

Let $\forest:=\forest_m$ denote the forest of the tower $\tower$. 
Let $\Sigma$ denote the set of all simplices that are added at elementary inclusions in $\tower$, and recall that $n=|\Sigma|$.
A $d$-simplex $\sigma \in \Sigma$ added by $\smap_i$ is formed by $d+1$ vertices, which correspond to $d+1$ roots of $\forest_{i+1}$, and 
equivalently, to $d+1$ nodes in $\forest$.
For a node $\node$ in $\forest$, we denote by $E(\node) \subseteq \Sigma$ the subset of simplices with at least one vertex that appears
as label in the subtree of $\forest$ rooted at $\node$.
If $\nodechildone$ and $\nodechildtwo$ are the children of $\node$, $E(\nodechildone)$ and $E(\nodechildtwo)$ are both subsets of $E(\node)$, but 
not disjoint in general. However, the following relation follows at once:
\begin{eqnarray}\label{eqn:E-equation}
	|E(\node)| & \geq & |E(\nodechildone)| + |E(\nodechildtwo)\setminus E(\nodechildone)|
\end{eqnarray}
We say that the set $\nodeset$ of nodes in $\forest$ is \emph{independent}, if there are no two nodes $\node_1\neq \node_2$ in $\nodeset$, such that 
$\node_1$ is an ancestor of $\node_2$ in $\forest$.
A vertex in $\complex_i$ appears as label in at most one $\forest$-subtree rooted at a vertex in the independent set $\nodeset$. 
Thus, a $d$-simplex $\sigma$ can only appear in up to $d+1$ $E$-sets of vertices in $\nodeset$. That implies:

\begin{lemma}\label{lem:indep_E_size}
	Let $\nodeset$ be an independent set of vertices in $\forest$. Then,
	\[
		\sum_{\node\in N} |E(\node)| \leq (\towerdim + 1) \cdot n,
	\]
\end{lemma}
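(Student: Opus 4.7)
The plan is to prove the inequality by double counting: we swap the order of summation and bound, for each simplex $\sigma \in \Sigma$, the number of $\node \in N$ for which $\sigma \in E(\node)$. Concretely, I would write
\[
	\sum_{\node \in N} |E(\node)| \;=\; \sum_{\sigma \in \Sigma} \bigl|\{\node \in N : \sigma \in E(\node)\}\bigr|,
\]
and then argue that the inner cardinality is at most $\dim(\sigma) + 1 \le \towerdim + 1$. Summing over the $n$ elements of $\Sigma$ immediately yields the claimed bound $(\towerdim + 1) \cdot n$.

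For the bound on the inner cardinality, fix a $d$-simplex $\sigma \in \Sigma$, and let $i$ be the index at which $\sigma$ is included, so that $\sigma \subseteq \vertexset(\complex_{i+1})$ consists of $d+1$ vertices. By the construction of the contracting forest, the vertices of $\complex_{i+1}$ correspond bijectively to the roots of $\forest_{i+1}$, which in turn persist as a set of $d+1$ specific nodes $x_0, \ldots, x_d$ in $\forest$ (each either remaining a root or eventually becoming a descendant of some later internal node). By definition of $E$, we have $\sigma \in E(\node)$ if and only if at least one of $x_0, \ldots, x_d$ lies in the subtree of $\forest$ rooted at $\node$.

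The key step is then to observe that, by independence of $N$, for any \emph{single} node $x_j$ of $\forest$, there is at most one $\node \in N$ such that $x_j$ lies in the subtree rooted at $\node$; otherwise two ancestors of $x_j$ in $N$ would be comparable in $\forest$, contradicting independence. Hence the number of $\node \in N$ with $\sigma \in E(\node)$ is bounded by $|\{x_0, \ldots, x_d\}| = d+1$, completing the argument.

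I do not expect any serious obstacle: the only subtlety is making precise the identification of the vertices of $\sigma$ with nodes of $\forest$ (rather than with vertex \emph{labels}, which may repeat due to the renaming convention in Section~\ref{ssec:topological_equivalence}). Working with nodes rather than labels is clean and the independence of $N$ directly gives the bound.
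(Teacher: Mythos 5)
Your double-counting argument is exactly the paper's (very terse) justification, which appears in the two sentences preceding the lemma: each vertex of $\complex_i$ corresponds to a node lying in at most one subtree rooted at an element of $\nodeset$ by independence, so a $d$-simplex belongs to at most $d+1 \le \towerdim+1$ of the sets $E(\node)$, and summing over the $n$ simplices of $\Sigma$ gives the bound. Your write-up is correct and simply makes the paper's implicit argument (including the node-versus-label identification) more explicit.
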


\paragraph{The cost of contracting.} 
Recall that a contraction $\complex_i \stackrel{\smap_i}{\to} \complex_{i+1}$ yields an inclusion 
$\hat{\complex}_i \hookrightarrow \hat{\complex}_{i+1}$ that potentially adds more than one simplex. 
Therefore, in order to bound the total size of $\hat{\complex}_m$, we need to bound the number of simplices added in all these contractions.

We define the \emph{cost} of a contraction $\smap_i$ as $|\hat{\complex}_{i+1} \setminus \hat{\complex}_i|$, that is, the number of simplices added 
in this step. Since each contraction corresponds to a node $\node$ in $\forest$, we can associate these costs to the internal nodes in the forest, 
denoted by $c(\node)$. The leaves get cost $0$.

\begin{lemma}\label{lem:vertex-cost-lemma}
	Let $\node$ be an internal node of $\forest$ with children $\nodechildone$, $\nodechildtwo$. 
	Then, $c(\node) \leq 2 \cdot |E(\nodechildone) \setminus E(\nodechildtwo)|$.
\end{lemma}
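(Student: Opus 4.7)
The plan is to chain three estimates on $c(\node)$. Let the contraction corresponding to $\node$ occur at step $i$ and contract vertices $v$ (marked inactive by the construction) with $u$ (the representative); by the smaller-star rule, $|\activestar(v,\hat\complex_i)|\le|\activestar(u,\hat\complex_i)|$. All simplices in $\hat\complex_{i+1}\setminus\hat\complex_i$ are of the form $u\join\sigma$ with $\sigma\in\activestar(v,\hat\complex_i)$, so $c(\node)\le|\activestar(v,\hat\complex_i)|$. Up to swapping the two children, I will take $\nodechildone$ to be the child whose root in $\forest_i$ carries the label $v$; the opposite labeling follows from the symmetric argument, absorbed into the inequality $|\activestar(v)|\le|\activestar(u)|$.

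Next I will identify $\activestar(v,\hat\complex_i)$ with $\cstar(v,\complex_i)$. For $\sigma\in\activestar(v,\hat\complex_i)$, the simplex $\sigma\cup\{v\}\in\hat\complex_i$ consists entirely of active vertices and, by Lemma~\ref{lem1}, therefore lies in $\complex_i$, witnessing $\sigma\in\cstar(v,\complex_i)$; the converse inclusion is direct. The standard bijection between $\link(v,\complex_i)$ and the positive-dimensional cofaces of $v$ then yields $|\cstar(v,\complex_i)|\le 2|\ostar(v,\complex_i)|$.

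The central step, and what I expect to be the main obstacle, is an injection $\iota\colon\ostar(v,\complex_i)\hookrightarrow E(\nodechildone)\setminus E(\nodechildtwo)$. A short induction shows that every simplex of $\complex_i$ is the image of some $\tilde\sigma\in\Sigma$ under the composition of the simplicial maps (inclusions introduce a new simplex, and each elementary contraction $\smap_j$ is surjective on simplices). For each $\sigma\in\ostar(v,\complex_i)$, pick such a preimage $\tilde\sigma$ and set $\iota(\sigma):=\tilde\sigma$; injectivity follows at once because distinct simplices of $\complex_i$ have disjoint preimage sets. The delicate verification is that $\tilde\sigma\in E(\nodechildone)\setminus E(\nodechildtwo)$: the subtree of $\nodechildone$ in $\forest$ coincides with the subtree in $\forest_i$ rooted at the root labeled $v$, so a vertex $w$ of $\tilde\sigma$ is associated to a node inside that subtree precisely when $w$ is mapped to $v$ at time $i$. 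Since $v\in\sigma$ and $u\notin\sigma$, at least one vertex of $\tilde\sigma$ maps to $v$ and no vertex maps to $u$ at time $i$, placing $\tilde\sigma$ in $E(\nodechildone)\setminus E(\nodechildtwo)$. Chaining the three estimates gives $c(\node)\le 2|E(\nodechildone)\setminus E(\nodechildtwo)|$.
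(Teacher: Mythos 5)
Your chain of estimates breaks at the injection $\iota\colon\ostar(v,\complex_i)\hookrightarrow E(\nodechildone)\setminus E(\nodechildtwo)$. The open star of $v$ in general contains simplices that also have $u$ as a vertex (e.g.\ the edge $\{u,v\}$ and its cofaces, if present). For such a $\sigma$, any origin $\tilde\sigma$ has one vertex labelling a node in the subtree of $\nodechildone$ \emph{and} one in the subtree of $\nodechildtwo$, so $\tilde\sigma\in E(\nodechildone)\cap E(\nodechildtwo)$ and lands outside your target set. Your sentence ``Since $v\in\sigma$ and $u\notin\sigma$'' silently assumes $u\notin\sigma$, which is exactly what fails. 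Concretely, if $\complex_i$ is a full triangle on $u,v,w$ built by inclusions, then $|\ostar(v,\complex_i)|=4$ while $|E(\nodechildone)\setminus E(\nodechildtwo)|=2$, so the intermediate inequality $|\ostar(v)|\le|E(\nodechildone)\setminus E(\nodechildtwo)|$ is simply false; your chain cannot be repaired by tightening constants.

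The missing idea is to discard the intersection of the two closed stars \emph{before} charging, which is what the paper does. Simplices $\sigma\in\cstar(v,\complex_i)\cap\cstar(u,\complex_i)$ contribute nothing to the cost, because for them $u\join\sigma$ already lies in $\hat{\complex}_i$; hence $c(\node)\le|\cstar(v,\complex_i)\setminus\cstar(u,\complex_i)|$ (and symmetrically with the roles of $u$ and $v$ swapped, which is also needed since the lemma must hold for both orderings of the children). For a simplex in this set difference, $u$ is genuinely not a vertex, so its origin has no vertex in the subtree rooted at the child corresponding to $u$, and your charging argument goes through verbatim, with the factor $2$ coming from handling the star and the link parts separately as you intended. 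Everything else in your write-up — the identification $\activestar(v,\hat\complex_i)=\cstar(v,\complex_i)$ via Lemma~\ref{lem1}, the existence and disjointness of origins, and the bound $|\cstar|\le 2|\ostar|$ — is sound and matches the paper's argument.
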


\begin{proof}
	Let $\smap_i: \complex_i \to \complex_{i+1}$ denote the contraction that is represented by the node $\node$, and let $w_1$ and $w_2$ the 
	labels of its children $\nodechildone$ and $\nodechildtwo$, respectively. By construction, $w_1$ and $w_2$ are vertices in $\complex_i$ that 
	are contracted by $\smap_i$. Let $C_1 = \cstar(w_1,\complex_i) \setminus \cstar(w_2,\complex_i)$ and 
	$C_2 = \cstar(w_2,\complex_i) \setminus \cstar(w_1,\complex_i)$. By Lemma~\ref{lem1}, 
	$\cstar(w_1,\complex_i)=\activestar(w_1,\hat{\complex}_i)$, and the same for $w_2$. So, because the simplices the two active closed stars 
	have in common will not influence the cost of the contraction, we have,
	\[
		c(\node) \leq \min\{|C_1|,|C_2|\}.
	\]
	
	In particular, $c(\node) \leq |C_1|$. We will show that $|C_1| \leq 2 \cdot |E(\nodechildone) \setminus E(\nodechildtwo)|$.
	
	For every $d$-simplex $\sigma \in \complex_i$, there is some $d$-simplex $\tau \in \Sigma$ that has been added in an elementary inclusion 
	$\smap_j$ with $j < i$, such that $\smap_{i-1} \circ \smap_{i-2} \circ \ldots \circ \smap_{j+1}(\tau) = \sigma$. We call $\tau$ an 
	\emph{origin} of $\sigma$. There might be more than one origin of a simplex, but two distinct simplices in $\complex_i$ cannot have a common 
	origin. Moreover, for every vertex $v$ of $\sigma$, the tree in $\forest_i$ whose root is labeled with $v$ contains exactly one vertex $v'$ 
	of $\tau$ as label. We omit the proof which works by simple induction.
	
	We prove the inequality by a simple charging argument. For each vertex in $C_1$, we charge a simplex in 
	$|E(\nodechildone) \setminus E(\nodechildtwo)|$ such that no simplex is charged more than twice.
	Note that $C_1=(\ostar(w_1,\complex_i)\cup\link(w_1,\complex_i))\setminus\cstar(w_2,\complex_i)$.
	If $\sigma \in \ostar(w_1,\complex_i)$, then fix an origin $\tau$ of $\sigma$.
	Then $\tau$ has a vertex that is a label in the subtree rooted at $\nodechildone$, so $\tau \in E(\nodechildone)$. At the same time, since 
	$w_2$ is not a vertex of $\sigma$, $\tau$ has no vertex in the subtree rooted at $\nodechildtwo$, so $\tau \notin E(\nodechildtwo)$. We 
	charge $\tau$ for the existence of $\sigma$. Because different simplices have different origins, every element in 
	$E(\nodechildone) \setminus E(\nodechildtwo)$ is charged at most once for $\ostar(w_1,\complex_i)$.
	If $\sigma \in \link(w_1,\complex_i)$, $\sigma':=w_1\join \sigma \in \ostar(w_1,\complex_i)$, and we can choose an origin $\tau'$ of 
	$\sigma'$.
	As before, $\tau'\in E(\nodechildone) \setminus E(\nodechildtwo)$ and we charge $\tau'$ for the existence of $\sigma$.
	Again, each element in $E(\nodechildone) \setminus E(\nodechildtwo)$ is charged at most once among all elements in the link.
	This proves the claim.
\end{proof}

An \emph{ascending path} $(\node_1, ..., \node_L)$, with $L \geq 1$, is a path in a forest such that $\node_{i+1}$ is the parent of $\node_i$, 
for $1 \leq i < L$. We call $L$ the \emph{length} of the path and $\node_L$ its \emph{endpoint}. 
For ascending paths in $\forest$, the \emph{cost} of the path is the sum of the costs of the nodes. 
We say that the set $P$ of ascending paths is \emph{independent}, if the endpoints in $P$ are pairwise different and form an independent set of 
nodes. We define the \emph{cost} of $P$ as the sum of the costs of the paths in $P$.

\begin{lemma}\label{lem:path-cost-lemma}
	An ascending path with endpoint $\node$ has cost at most $2 \cdot |E(\node)|$. An independent set of ascending paths in $\forest$
        has cost at most $2 \cdot (\towerdim + 1) \cdot n$.
\end{lemma}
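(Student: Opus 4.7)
The first statement (the single-path bound) is the heart of the lemma; the second follows by combining it with Lemma~\ref{lem:indep_E_size}. My plan is to prove the single-path bound by a telescoping argument along the ascending path, exploiting Lemma~\ref{lem:vertex-cost-lemma} together with the monotonicity relation~\eqref{eqn:E-equation}.

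Concretely, fix an ascending path $(\node_1,\ldots,\node_L)$ with endpoint $\node=\node_L$. For each $i\geq 2$, let $\node_i'$ denote the sibling of $\node_{i-1}$, so that $\node_{i-1}$ and $\node_i'$ are the two children of $\node_i$. Lemma~\ref{lem:vertex-cost-lemma} is symmetric in the two children (the proof actually gives $c(\node_i)\leq\min\{|C_1|,|C_2|\}$), so I may apply it in the form $c(\node_i)\leq 2\,|E(\node_i')\setminus E(\node_{i-1})|$. Now the key inequality~\eqref{eqn:E-equation}, applied at $\node_i$ with the roles $\nodechildone=\node_{i-1}$ and $\nodechildtwo=\node_i'$, yields $|E(\node_i')\setminus E(\node_{i-1})|\leq |E(\node_i)|-|E(\node_{i-1})|$. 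Combining the two bounds gives the telescoping inequality
\[
c(\node_i)\;\leq\;2\bigl(|E(\node_i)|-|E(\node_{i-1})|\bigr).
\]
Summing over $i=2,\ldots,L$ collapses to $2(|E(\node_L)|-|E(\node_1)|)$. For the base case $i=1$, the node $\node_1$ is either a leaf (then $c(\node_1)=0$) or internal; in the latter case Lemma~\ref{lem:vertex-cost-lemma} directly gives $c(\node_1)\leq 2|E(\nodechildone)\setminus E(\nodechildtwo)|\leq 2|E(\node_1)|$. Either way, adding this to the telescoped sum produces a total of at most $2|E(\node_L)|=2|E(\node)|$, which is the first claim.

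The second statement then follows essentially for free. Given an independent set $P$ of ascending paths, let $\nodeset$ be the set of their endpoints, which by assumption is an independent set of nodes in $\forest$. Summing the single-path bound over $P$ gives
\[
\sum_{p\in P}\mathrm{cost}(p)\;\leq\;\sum_{\node\in\nodeset}2\,|E(\node)|\;\leq\;2(\towerdim+1)\cdot n,
\]
where the final inequality is exactly Lemma~\ref{lem:indep_E_size}.

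\textbf{Anticipated obstacle.} The only delicate point is ensuring the correct orientation in the invocation of Lemma~\ref{lem:vertex-cost-lemma}: one must use the bound via $|E(\node_i')\setminus E(\node_{i-1})|$ (off-path child minus on-path child) so that~\eqref{eqn:E-equation} telescopes cleanly along the path; the reverse choice would not telescope. Once this orientation is fixed and it is noted that the lemma's bound is symmetric in the two children, the argument is purely bookkeeping.
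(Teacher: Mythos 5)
Your proof is correct and takes essentially the same route as the paper: the paper phrases the telescoping as an induction on the sub-paths $p_i=(\node_1,\ldots,\node_i)$, showing $c(p_i)\leq 2|E(\node_i)|$ via Lemma~\ref{lem:vertex-cost-lemma} and~\eqref{eqn:E-equation}, which is exactly your telescoped sum. The only cosmetic difference is the base case: the paper assumes WLOG that the path starts at a leaf (extending it if necessary), whereas you handle an internal starting node directly; both are fine.
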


\begin{proof}
	For the first statement, let $p = (\node_1, ..., \node_L)$ be an ascending path with $v_L=v$. 
	Without loss of generality, we can assume the the path starts with a leaf $\node_1$, because otherwise, 
	we can always extend the path to a longer path with at least the same cost.
	We let $p_i = (\node_1,\ldots,\node_i)$ denote the sub-path 
	ending at $\node_i$, for $i = 1,\ldots,L$, so that $p_L = p$. We let $c(p_i)$ denote the cost of the path $p_i$ and show by induction that 
	$c(p_i) \leq 2 \cdot |E(\node_i)|$. For $i = 1$, this follows because $c(p_1) = 0$. For $i = 2,\ldots,L$, $\node_i$ is an internal node, and 
	its two children are $\node_{i-1}$ and some other node $\node'_{i-1}$. Using induction and Lemma~\ref{lem:vertex-cost-lemma}, we have that
	\[
		c(p_i) = c(p_{i-1}) + c(\node_i) 
			\leq 2 \cdot (|E(\node_{i-1})| + |E(\node'_{i-1}) \setminus E(\node_{i-1})|) 
			\leq 2 \cdot |E(\node_i)|,
	\]
	where the last inequality follows from~\eqref{eqn:E-equation}. The second statement follows from 
	Lemma \ref{lem:indep_E_size} because the endpoints of the paths form an independent set in $\forest$.
\end{proof}

\paragraph{Ascending path decomposition.}
An \emph{only-child-path} in a binary tree is an ascending path starting in a leaf and ending at the first encountered node that has a sibling, or 
at the root of the tree. An only-child-path can have length $1$, if the starting leaf has a sibling already. Examples of only-child-paths are shown  
in Figure~\ref{fig:onlychildpaths}. We observe that no node with two children lies on any only-child-path, which implies that the set of 
only-child-paths forms an independent set of ascending paths.

Consider the following pruning procedure for a full binary forest $\forest$. Set $\forest_{(0)} \gets \forest$. In iteration $i$, we obtain the 
forest $\forest_{(i)}$ from $\forest_{(i-1)}$ by deleting the only-child-paths of $\forest_{(i-1)}$. We stop when $\forest_{(i)}$ is empty; this 
happens eventually because at least the leaves of $\forest_{(i-1)}$ are deleted in the $i$-th iteration. Because we start with a full forest, the 
only-child-paths in the first iteration are all of length $1$, and consequently $\forest_{(1)}$ arises from $\forest_{(0)}$ by deleting the leaves 
of $\forest_{(0)}$. Note that the intermediate forests $\forest_{(1)},\forest_{(2)},\ldots$ are not full forests in general. 
Figure~\ref{fig:onlychildpaths} shows the pruning procedure on an example.

\begin{figure}[h]
	\centering

	\includegraphics[width=84mm]{./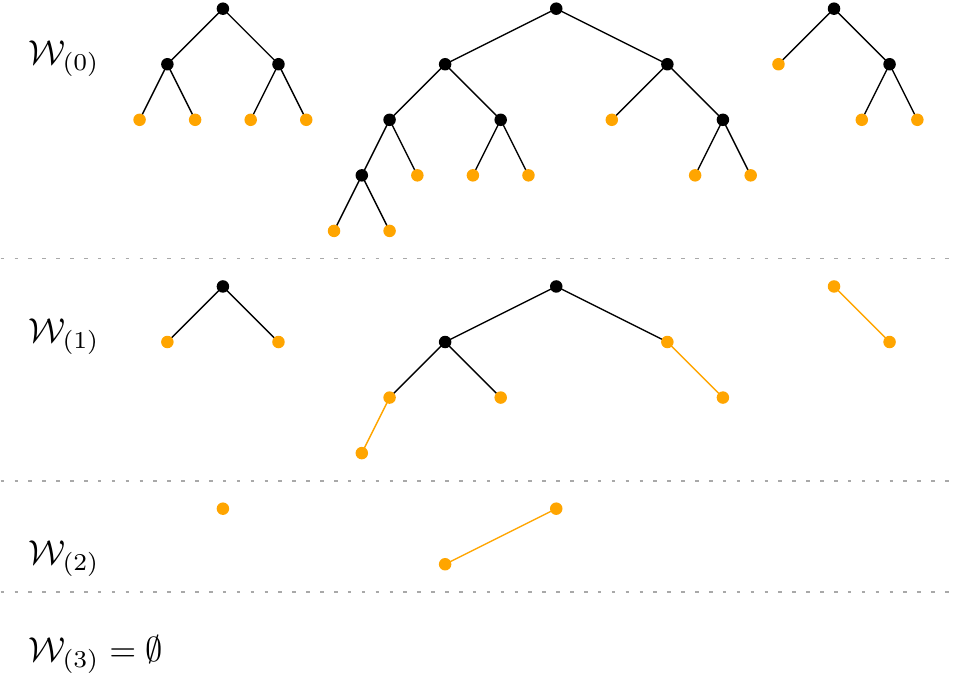}

	\caption{Iterations of the pruning procedure; the only-child-paths are marked in color} \label{fig:onlychildpaths}
\end{figure}

To analyze this pruning procedure in detail, we define the following integer valued function for nodes in $\forest$:
\[
	r(\node)=
	\begin{cases}
		1, 
			& \text{ if $\node$ is a leaf} \\
		r(\nodechildone) + 1, 
			& \text{ if $\node$ has children $\nodechildone,\nodechildtwo$ and $r(\nodechildone) = r(\nodechildtwo)$} \\
		\max \{r(\nodechildone), r(\nodechildtwo)\}, 
			& \text{ if $\node$ has children $\nodechildone,\nodechildtwo$ and $r(\nodechildone) \neq r(\nodechildtwo)$}
	\end{cases}
\]

\begin{lemma} \label{lem12}
	A node $\node$ of a full binary forest $\forest$ is deleted in the pruning procedure during the $r(\node)$-th iteration.
\end{lemma}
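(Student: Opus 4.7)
My plan is strong induction on the size of the subtree rooted at $\node$ in $\forest$. Induction on $r(\node)$ directly would not work, because in the asymmetric branch of the definition a child of $\node$ can share $r(\node)$; inducting on subtree size, by contrast, always gives the hypothesis on both children. As a first step I would record the following geometric characterization: for a node $\node \in \forest_{(i)}$, $\node$ lies on some only-child-path of $\forest_{(i)}$ if and only if the subtree of $\forest_{(i)}$ rooted at $\node$ is a chain from $\node$ down to a leaf (so in particular $\node$ has at most one child in $\forest_{(i)}$). This follows directly from the definition: an only-child-path traced downward from $\node$ never branches, and conversely a chain subtree yields such a path when read upward from its terminal leaf.

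For the base case, if $\node$'s subtree has size one then $\node$ is a leaf of $\forest$ with $r(\node) = 1$; since $\forest$ is a full forest, $\{\node\}$ is already an only-child-path in $\forest_{(0)}$, so $\node$ is deleted in iteration $1$.

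For the inductive step, $\node$ has children $\nodechildone, \nodechildtwo$, both with strictly smaller subtrees, so by hypothesis $\nodechildone$ and $\nodechildtwo$ are deleted in iterations $r(\nodechildone)$ and $r(\nodechildtwo)$, respectively. In the symmetric case $r(\nodechildone) = r(\nodechildtwo) =: s$, giving $r(\node) = s+1$: for every $j \leq s$, both children still lie in $\forest_{(j-1)}$, so $\node$ has two children there and its subtree is not a chain, hence $\node$ survives iteration $j$; in $\forest_{(s)}$, both children are gone and $\node$ is a leaf, so it is deleted in iteration $s+1 = r(\node)$. In the asymmetric case, say $r(\nodechildone) > r(\nodechildtwo)$, set $k := r(\nodechildone) = r(\node)$ and $\ell := r(\nodechildtwo)$: the same two-children argument handles $j \leq \ell$; for $\ell < j < k$, $\nodechildtwo$ has been removed and only $\nodechildone$ survives in $\forest_{(j-1)}$, but by hypothesis $\nodechildone$ is not deleted in iteration $j$, so its subtree in $\forest_{(j-1)}$ is not a chain, hence neither is $\node$'s, and $\node$ again survives; finally, in $\forest_{(k-1)}$ the hypothesis says $\nodechildone$ is deleted in iteration $k$, so $\nodechildone$'s subtree is a chain there by the characterization, and $\node$'s subtree (which is just $\{\node\}$ attached on top of $\nodechildone$'s) is also a chain, so $\node$ is deleted in iteration $k = r(\node)$.

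The main obstacle is the interval $\ell < j < k$ in the asymmetric case, where $\node$ carries a single surviving child and could be swept into an only-child-path before iteration $k$. Ruling this out is exactly what the inductive hypothesis on $\nodechildone$ provides, which is why the correct choice of induction variable is the subtree size rather than $r(\node)$ itself.
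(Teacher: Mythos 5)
Your proof is correct and follows essentially the same route as the paper's: structural induction on the tree (equivalently, your induction on subtree size), with the same case split on whether $r(\nodechildone)=r(\nodechildtwo)$. Your explicit chain characterization of only-child-path membership and your verification that $\node$ survives the intermediate iterations $\ell<j<k$ in the asymmetric case make explicit two points the paper's proof leaves implicit, but the underlying argument is the same.
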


\begin{proof}
	We prove the claim by induction on the tree structure. If $\node$ is a leaf, it is removed in the first iteration, and $r(\node)=1$. If 
	$\node$ is an internal node with children $\nodechildone$ and $\nodechildtwo$, let $r_1:=r(\nodechildone)$ and $r_2:=r(\nodechildtwo)$. 
	By induction, $\nodechildone$ is deleted in the $r_1$-th iteration and $\nodechildtwo$ is deleted in the $r_2$-th iteration. There are two 
	cases: if $r_1 = r_2$, $\node$ still has two children after $r_1 - 1$ iterations. This implies that $\node$ does not lie on an only-child-
	path in the forest $\forest_{(r_1 - 1)}$ and is therefore not deleted in the $r_1$-th iteration. But because its children are deleted, 
	$\node$ is a leaf in $\forest_{(r_1)}$ and therefore deleted in the $(r_1 + 1)$-th iteration. 
	It remains the second case that $r_1 \neq r_2$. Assume without loss of generality that $r_1 > r_2$, so that $r(\node) = r_1$. In iteration 
	$r_1$, $\nodechildone$ lies on an only-child-path in $\forest_{(r_1 - 1)}$. Because $\nodechildtwo \notin \forest_{(r_1 - 1)}$, 
	$\nodechildone$ has no sibling in $\forest_{(r_1-1)}$, so the only-child-path extends to $\node$. Consequently, $\node$ is deleted in the 
	$r_1$-th iteration.
\end{proof}

\begin{lemma} \label{lem:r_value_bound}
	For a node $\node$ in a full binary forest, let $s(\node)$ denote the number of nodes in the subtree rooted at $\node$. 
	Then $s(\node) \geq 2^{r(\node)}-1$. In particular, $r(\node) \leq \log_2 (s(\node)+1)$.
\end{lemma}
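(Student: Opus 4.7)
The plan is to prove the bound $s(\node) \geq 2^{r(\node)} - 1$ by structural induction on the subtree rooted at $\node$, mirroring exactly the three cases in the definition of $r(\cdot)$. The logarithmic corollary then follows immediately by rearranging to $s(\node) + 1 \geq 2^{r(\node)}$ and taking $\log_2$ on both sides.

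For the base case, if $\node$ is a leaf then $s(\node) = 1$ and $r(\node) = 1$, so the inequality holds with equality. For the inductive step, I let $\node$ have children $\nodechildone, \nodechildtwo$, assume the inductive hypothesis for both, and use the basic identity $s(\node) = 1 + s(\nodechildone) + s(\nodechildtwo)$. In the \emph{balanced} case $r(\nodechildone) = r(\nodechildtwo) = r$, so $r(\node) = r+1$, and I obtain
\[
    s(\node) \;\geq\; 1 + 2(2^r - 1) \;=\; 2^{r+1} - 1 \;=\; 2^{r(\node)} - 1.
\]
In the \emph{unbalanced} case, I assume without loss of generality $r(\nodechildone) > r(\nodechildtwo)$ so that $r(\node) = r(\nodechildone)$; then using the inductive hypothesis on $\nodechildone$ together with the trivial bound $s(\nodechildtwo) \geq 0$ gives
\[
    s(\node) \;\geq\; 1 + (2^{r(\nodechildone)} - 1) + 0 \;=\; 2^{r(\node)} \;\geq\; 2^{r(\node)} - 1.
\]

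There is essentially no obstacle here: the induction is clean because the ``$+1$'' contributed by $\node$ itself is precisely what powers both cases (it supplies the extra doubling in the balanced case and pushes the unbalanced case from $2^{r(\node)} - 1$ up to $2^{r(\node)}$). The final sentence of the lemma is then just the logarithmic reformulation of the same inequality.
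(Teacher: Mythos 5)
Your proof is correct and follows essentially the same inductive argument as the paper; the only cosmetic difference is that you handle the unbalanced case $r(\nodechildone) \neq r(\nodechildtwo)$ directly via $s(\nodechildtwo) \geq 0$, whereas the paper sidesteps it by restricting to a node of minimal $s$-value among those with a given $r$-value, which forces the balanced case. Both routes are valid and equally short.
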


\begin{proof}
	We prove the claim by induction on $r(\node)$. Note that $r(\node) = 1$ if and only if $v$ is a leaf, which implies the statement for 
	$r(\node) = 1$. For $r(\node) > 1$, it is sufficient to prove the statement assuming that $v$ has a minimal $s$-value among all nodes with 
	the same $r$-value. Since $\node$ is not a leaf, it has two children $\nodechildone$ and $\nodechildtwo$. They satisfy 
	$r(\nodechildone) = r(\nodechildtwo) = r(\node) - 1$, because otherwise $r(\node) = r(\nodechildone)$ or $r(\node) = r(\nodechildtwo)$, 
	contradicting the minimality of $\node$. By induction hypothesis, we obtain that
	\[
		s(\node) = 1 + s(\nodechildone) + s(\nodechildtwo) 
			\geq 1 + \left(2^{r(\nodechildone)} - 1 \right) + \left(2^{r(\nodechildtwo)} - 1 \right) 
			= 2^{r(\node)} - 1.
	\]
\end{proof}

\begin{proposition} \label{prop12}
	$|\hat{\complex}_m| \leq n + 2 \cdot (\towerdim + 1) \cdot n \cdot (1 + \log_2(n_0)) = O(n \cdot \towerdim \cdot \log_2(n_0))$, 
	where $n_0$ is the number of vertices included in $\tower$.
\end{proposition}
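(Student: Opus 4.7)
The plan is to account for the total size of $\hat{\complex}_m$ as the sum of two contributions: the $n$ simplices introduced by the elementary inclusions in $\tower$, and the simplices added through contractions, whose total number equals $\sum_{\node} c(\node)$ summed over the internal nodes of $\forest$. So the core task is to bound $\sum_{\node} c(\node)$ by $2(\towerdim+1)\cdot n\cdot(1+\log_2 n_0)$ using the pruning procedure on $\forest$.

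First, I would apply the pruning procedure and partition the internal nodes of $\forest$ according to the iteration in which they are deleted. In the $i$-th iteration, the nodes that get removed lie on the only-child-paths of $\forest_{(i-1)}$, and those paths, viewed as ascending paths in $\forest$, have pairwise distinct endpoints that form an independent set (since an only-child-path terminates at a node which either is a root in $\forest_{(i-1)}$ or has a sibling in $\forest_{(i-1)}$, so no two endpoints can be in an ancestor-descendant relation even when lifted back to $\forest$). Hence Lemma~\ref{lem:path-cost-lemma} directly gives that the total cost of all contractions represented by nodes deleted in the $i$-th iteration is at most $2(\towerdim+1)\cdot n$.

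Next, I would bound the number of iterations of the pruning procedure. By Lemma~\ref{lem12}, the process terminates after $\max_{\node \in \forest} r(\node)$ iterations, and this maximum is achieved at some root of $\forest$. Each tree in $\forest$ is a full binary tree whose leaves correspond to vertex inclusions of $\tower$, so the total number of leaves in $\forest$ is $n_0$, and a single tree has $s(\text{root})\leq 2n_0-1$ nodes. Lemma~\ref{lem:r_value_bound} then yields $r(\text{root})\leq \log_2(s(\text{root})+1)\leq \log_2(2n_0)=1+\log_2 n_0$, so the pruning procedure uses at most $1+\log_2 n_0$ iterations.

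Combining these two ingredients, the total cost of all contractions is at most $2(\towerdim+1)\cdot n\cdot(1+\log_2 n_0)$, and adding the $n$ simplices contributed by elementary inclusions yields the stated bound. The main obstacle is the verification that the only-child-paths of $\forest_{(i-1)}$ really do form an independent set of ascending paths in the original $\forest$ so that Lemma~\ref{lem:path-cost-lemma} applies in each iteration; this requires a short argument that an endpoint of an only-child-path in $\forest_{(i-1)}$ cannot be a strict ancestor in $\forest$ of another such endpoint, which follows because if it were, the two endpoints would lie in a common chain of nodes not yet pruned and the ``lower'' endpoint would still have a sibling that has not been removed, contradicting that the path containing the ``upper'' endpoint was an only-child-path in $\forest_{(i-1)}$.
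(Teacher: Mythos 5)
Your proof is correct and follows essentially the same route as the paper: decompose the internal nodes of $\forest$ into the independent sets of only-child-paths produced by the pruning procedure, bound each iteration's cost by $2(\towerdim+1)n$ via Lemma~\ref{lem:path-cost-lemma}, and bound the number of iterations by $1+\log_2 n_0$ via Lemmas~\ref{lem12} and~\ref{lem:r_value_bound}. The independence of the only-child-paths, which you verify explicitly, is exactly the observation the paper records in the ``Ascending path decomposition'' paragraph.
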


\begin{proof}
	Applying the pruning procedure to the contraction forest $\forest$ of $\tower$, we obtain in every iteration a set of independent ascending 
	paths of $\forest$, and the cost of this set is bounded by $2 \cdot (\towerdim + 1) \cdot n$ with Lemma~\ref{lem:path-cost-lemma}.
	Because $\forest$ has at most $2 \cdot n_0 - 1$ nodes, any node $\node$ satisfies $r(\node) \leq \log_2(2 \cdot n_0)$ by 
	Lemma~\ref{lem:r_value_bound}. It follows that the pruning procedure ends after $1 + \log_2(n_0)$ iterations, so the total cost of the 
	contraction forest is at most $2 \cdot (\towerdim + 1) \cdot n \cdot (1 + \log_2(n_0))$. By definition, this cost is equal to the number of 
	simplices added in all contraction steps. Together with the $n$ simplices added in inclusion steps, the bound follows.
\end{proof}

\subsection{Algorithm}\label{ssec:algorithm_for_conversion}

We will make frequent use of the following concept: a \emph{dictionary}
is a data structure that stores a set of \emph{items} of the form \texttt{(k,v)},
where \texttt{k} is called the \emph{key} and \texttt{v} is called
the \emph{value} of the item. 
We assume that all keys stored in the dictionary are pairwise different.
The dictionary support three operations:
\texttt{insert(k,v)} adds a new item to the dictionary, \texttt{delete(k)}
removes the item with key \texttt{k} from the dictionary (if it exists),
and \texttt{search(k)} returns the item with key \texttt{k},
or returns that no such item exists. Common realizations
are balanced binary search trees~\cite[\S12]{cormen} and hash tables~\cite[\S11]{cormen}.

\paragraph{Simplicial complexes by dictionaries.}
The main data structure of our algorithm is a dictionary $\dict$ that represents
a simplicial complex. Every item stored in the dictionary represents a simplex,
whose key is the list of its vertices. Every simplex $\sigma$ itself stores an dictionary
$\innerdict_\sigma$.
Every item in $\innerdict_\sigma$ is a pointer to another item in $\dict$,
representing a cofacet $\tau$ of $\sigma$. The key of the item is
a vertex identifier (e.g., an integer) for $v$ such that $\tau=v\join\sigma$.

How large is $\dict$ for a simplicial complex $\complex$ with $n$ simplices
of dimension $\towerdim$? 
Observe that $\dict$ stores $n$ items, and each key is of size $\leq\towerdim+1$.
That yields a size of $O(n\towerdim)$ plus the size of all $\innerdict_\sigma$.
Since every simplex is the cofacet of at most $\towerdim$ simplices, the size of all these
inner dictionaries is also bounded by $O(n\towerdim)$ 
(assuming that the size of a dictionary is linear in the number of stored elements).

We can insert and delete simplices efficiently in $\dict$ using
dictionary operations. For instance, to insert a simplex $\sigma$
given as a list of vertices, we insert a new item in $\dict$
with the key. Then we search for the $\towerdim$ facets of $\sigma$
(using dictionary search in $\dict$), and notify each facet $\tau$
of $\sigma$ about the insertion by adding a pointer to $\sigma$ to $\innerdict_\tau$, 
using the vertex $\sigma\setminus\tau$ as the key. 
The deletion procedure works similarly. 
Each simplex insertion and deletion requires $O(\towerdim)$ dictionary operations.
In what follows, it is convenient to assume that dictionary operations
have unit costs; we multiply the time complexity
with the cost of a dictionary operation at the end
to compensate for this simplification.

\paragraph{The conversion algorithm.}
We assume that the tower $\tower$ is given to us as a stream where each element
represents a simplicial map $\smap_i$ in the tower.
Specifically, an element starts with a token \{\texttt{INCLUSION}, \texttt{CONTRACTION}\}
that specifies the type of the map. In the first case, the token is followed
by a non-empty list of vertex identifiers specifying the vertices of the simplex to be added.
In the second case, the token is followed by two vertex identifiers $u$ and $v$,
specifying a contraction of type $\contr{u}{v}{i}$.

The algorithm outputs a stream of simplices specifying the filtration
$\filtration$. Specifically, while handling the $i$-th input element,
it outputs the simplices of $\hat{\complex}_{i+1}\setminus\hat{\complex}_i$
in increasing order of dimension (to ensure that every prefix is a simplicial complex).
For simplicity, we assume that output simplices are also specified by a list
of vertices~-- the algorithm can easily be adapted to return the boundary matrix
of the filtration in sparse list representation with the same complexity bounds.

We use an initially empty dictionary $\dict$ as above, 
and maintain the invariant that after the $i$-th iteration,
$\dict$ represents the active subcomplex of $\hat{\complex}_i$,
which is equal to $\complex_i$ by Lemma~\ref{lem1}.

If the algorithm reads an inclusion of a simplex $\sigma$ from the stream, 
it simply adds $\sigma$ to $\dict$ (maintaining the invariant)
and writes $\sigma$ to the output stream.

If the algorithm reads a contraction of two vertices $u$ and $v$,
from $\complex_i$ to $\complex_{i+1}$,
we let $c_i$ denote the cost of the contraction, that is,
$c_i=|\hat{\complex}_{i+1}\setminus\hat{\complex}_i|$.
The first step is to determine which of the vertices has the smaller
active closed star in $\hat{\complex}_i$, or equivalently,
which vertex has the smaller closed star in $\complex_i$.
The size of the closed star of a vertex $v$ could be computed
by a simple graph traversal in $\dict$, starting at a vertex $v$
and following the cofacet pointers recursively, counting the number
of simplices encountered. However, we want to identify
the smaller star with only $O(c_i)$ operations, and the closed star
can be much larger. Therefore, we change the traversal in several ways:

First of all, observe that $|\cstar(u)|\leq |\cstar(v)|$ if and only if $|\ostar(u)|\leq |\ostar(v)|$ (in $\complex_i$).
Now define $\ostar(u,\neg v):=\ostar(u)\setminus\ostar(v)$. 
Then, $|\ostar(u)|\leq |\ostar(v)|$ if and only if $|\ostar(u,\neg v)|\leq |\ostar(v,\neg u)|$,
because we subtracted the intersection of the stars on both sides.
Finally, note that $\min \{|\ostar(u,\neg v)|,|\ostar(v,\neg u)|\} \leq c_i$,
as one can easily verify.
Moreover, we can count the size of $\ostar(u,\neg v)$ by a cofacet traversal from $u$,
ignoring cofacets that contain $v$ (using the key of $\innerdict_\ast$),
in $O(|\ostar(u,\neg v)|)$ time. However, this is still not enough, because
counting both sets independently gives a running time of $\max \{|\ostar(u,\neg v)|,|\ostar(v,\neg u)|\}$.
The last trick is that we count the sizes of $\ostar(u,\neg v)$ and $\ostar(v,\neg u)$ at the same time
by a simultaneous graph traversal of both, terminating as soon as one of the traversal stops.
The running time is then proportional to $2 \cdot \min\{|\ostar(u,\neg v)|,|\ostar(v,\neg u)|\} = O(c_i)$,
as required.

Assume w.l.o.g. that $|\cstar(u)|\leq |\cstar(v)|$. Also in time $O(c_i)$, we can obtain $\ostar(u,\neg v)$.
We sort its elements by increasing dimension, which can be done in $O(c_i+\towerdim)$ using integer sort.
For each simplex $\sigma = \{u,v_1,\ldots,v_k\} \in \ostar(u,\neg v)$ in order, we check whether $\{v,v_1,\ldots,v_k\}$
is in $\dict$. If not, we add it to $\dict$ and also write it to the output stream.
Then, we write $\{u,v,v_1,\ldots,v_k\}$ to the output stream (note that we do not have to check it existence in $\hat{\complex}_i$, 
because it does not by construction, and there is no need to add it to $\dict$ because of the next step).
At the end of the loop,
we wrote exactly the simplices in $\complex_{i+1}\setminus\complex_i$ to the output stream,
which proves correctness.

It remains to maintain the invariant on $\dict$. Assuming still that $|\cstar(u)|\leq |\cstar(v)|$,
$u$ turns inactive in $\hat{\complex}_{i+1}$. We simply traverse over all cofaces of $u$
and remove all encountered simplices from $\dict$. After this operations, the invariant holds.
This ends the description of the algorithm.

\paragraph{Complexity analysis.}
By applying the operation costs on the above described algorithm, we obtain the following statement.
Combined with Propositions~\ref{prop:barcode_equiv} and~\ref{prop12}, it completes the proof of Theorem~\ref{thm:main_theorem_1}.

\begin{proposition} \label{prop:complexity}
	The algorithm requires $O(\towerdim \cdot \towerwidth)$ space and $O(\towerdim \cdot |\hat{\complex}_m| \cdot C_{\towerwidth})$ time, where
	$\towerwidth = \max_{i=0,\ldots,m}|\complex_i|$ and $C_{\towerwidth}$ is the cost of an operation in a dictionary with at most $\towerwidth$ 
	elements.
\end{proposition}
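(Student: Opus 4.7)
The plan is to separately bound the space and the time requirements of the algorithm described above, expressing all costs in dictionary operations and multiplying by $C_\towerwidth$ at the end.

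For space, the only data structure maintained across iterations is the dictionary $\dict$, whose active content equals $\complex_i$ by Lemma~\ref{lem1} and hence has at most $\towerwidth$ simplices. The storage analysis at the beginning of the subsection gives $\dict$ a size of $O(\towerdim \cdot \towerwidth)$, which dominates any temporary storage (such as lists of at most $c_i$ simplices, each of size $O(\towerdim)$, used while processing a contraction).

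For time, I would separately bound the contribution of inclusions and contractions. Each elementary inclusion of a simplex $\sigma$ requires $O(\towerdim)$ dictionary operations (inserting $\sigma$, updating its at most $\towerdim+1$ facet dictionaries, and emitting $\sigma$), so all inclusion steps together contribute $O(\towerdim \cdot n)$ operations. For an elementary contraction with cost $c_i = |\hat{\complex}_{i+1}\setminus\hat{\complex}_i|$, the algorithm has three phases: (i) the simultaneous cofacet traversal that identifies the vertex with smaller active open star, (ii) the enumeration of $\ostar(u,\neg v)$ that emits the new simplices and inserts the required link simplices into $\dict$, and (iii) the deletion of all cofaces of the deactivated vertex from $\dict$. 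Phases (i) and (ii) visit $O(c_i)$ simplices and spend $O(\towerdim)$ dictionary operations per visit, giving a per-contraction cost of $O(\towerdim \cdot c_i)$.

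The main obstacle is phase (iii), since the number of cofaces of the deactivated vertex is not \emph{a priori} bounded by $c_i$. I would resolve this by an amortized counting argument: the total number of simplex insertions into $\dict$ across the whole run is $n$ (from inclusions) plus $\sum_i |\complex_{i+1}\setminus\complex_i|$ (from contractions). By Lemma~\ref{lem1}, every simplex in $\complex_{i+1}\setminus\complex_i$ is active in $\hat{\complex}_{i+1}$ and absent from $\hat{\complex}_i$, so the second sum is at most $\sum_i c_i$, bringing the total insertions to $O(|\hat{\complex}_m|)$. Since deletions cannot exceed insertions, phase (iii) contributes $O(\towerdim \cdot |\hat{\complex}_m|)$ dictionary operations overall. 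Summing all contributions gives $O(\towerdim \cdot |\hat{\complex}_m|)$ dictionary operations in total, which multiplied by $C_\towerwidth$ yields the claimed time bound.
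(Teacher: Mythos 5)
Your proposal is correct and follows essentially the same route as the paper's proof: space is bounded by the invariant that $\dict$ holds $\complex_i$, inclusions cost $O(\towerdim\cdot n)$, contraction additions cost $O(\towerdim\cdot c_i)$ summing to $O(\towerdim\cdot|\hat{\complex}_m|)$, and the deletions in phase (iii) are handled by the same amortization (each simplex is inserted into $\dict$ at most once and every inserted simplex belongs to $\hat{\complex}_m$, so total deletions are $O(|\hat{\complex}_m|)$). No gaps.
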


\begin{proof}
	The space complexity follows at once from the invariant, because the size of $\dict$ is at most $O(\towerdim|\complex_i|)$
	during the $i$-th iteration. 

	For the time bound, we set $S:=|\hat{\complex}_m|$ for convenience and 
	show that the algorithm finishes in $O(\towerdim\cdot S)$ steps,
	assuming dictionary operations to be of constant cost.
	We have one simplex insertion per elementary inclusion which requires $O(\towerdim)$ operations. 
	Thus, all inclusions are bounded by $O(n\towerdim)$, which is subsumed by our bound as $n\leq S$.
	For the contraction case, we need $O(c_i)$ to identify the smaller star, $O(c_i+\towerdim)$
	to get a sorted list of $\ostar(u,\neg v)$ (or vice versa), and $O(\towerdim\cdot c_i)$
	to add new vertices to $\dict$. Moreover, we delete the star of $u$ from $\dict$;
	the cost for that is $O(\towerdim \cdot d_i)$, where $d_i$ is the number of deleted simplices.
	Thus, the complexity of a contraction is $O(\towerdim \cdot (c_i + d_i))$.

	Since $c_i$ is the number of simplices added to the filtration at step $i$,
	the sum of all $c_i$ is bounded by $O(S)$.
	Moreover, because every simplex that ever appears in $\dict$ belongs to $\hat{\complex}_m$ 
	and every simplex is inserted only once,
	the sum of all $d_i$ is bounded by $O(S)$ as well.
	Therefore, the combined cost over all contractions is $O(\towerdim\cdot S)$ as required. 
\end{proof}

Note that the dictionary $\dict$ has lists of identifiers of length up to $\towerdim+1$ as keys, so that
comparing two keys has cost $O(\towerdim)$. Therefore, using balanced binary trees as dictionary,
we get a complexity of $C_{\towerwidth}=O(\towerdim \log\towerwidth)$.
Using hash tables, we get an expected worst-case complexity of $C_{\towerwidth}=O(\towerdim)$.

\paragraph{Experimental results.}
The following tests where made on a 64-bit Linux (Ubuntu) HP machine with a 3.50 GHz Intel processor and 63 GB RAM. The programs were all 
implemented in C++ and compiled with optimization level -O2. Our algorithm was implemented in the software Sophia.\footnote{\url{https://bitbucket.org/schreiberh/sophia/}}

To test its performance, we compared it to the software 
Simpers\footnote{\url{http://web.cse.ohio-state.edu/~tamaldey/SimPers/Simpers.html}} (downloaded in August 2017),
which is the implementation of the Annotation Algorithm from Dey, Fan and Wang described in \cite{dfw-computing}. 
Simpers computes the persistence of the given filtration, so we add to our time the time the library PHAT (version 1.5) needs to compute the
persistence from the generated filtration. PHAT was used with its default parameters and its '-{}-ascii -{}-verbose' options activated. 
Simpers also used its default parameters except for the dimension parameter which was set to 5.

The results of the tests are in Table~\ref{tab:resultsAlg1}. 
The timings for File IO are not included in any process time except the input reading of Sophia.
The memory peak was obtained via the '/usr/bin/time -v' Linux command. Each command was repeated 10 times and the average was token.
The first three towers in the table, data1-3, were generated incrementally on a set of $n_0$ vertices: 
In each iteration, with $90 \%$ probability, a new simplex is included, that is picked uniformly at random among the simplices 
whose facets are all present in the complex, and with $10 \%$ probability, two randomly chosen vertices of the complex are contracted. 
This is repeated until the complex on the remaining $k$ vertices forms a $k-1$-simplex, in which case no further simplex can be added.
The remaining data was generated from the SimBa (downloaded in June 2016) library with default parameters using the point clouds from
\cite{dsw-simba}. To obtain the towers that SimBa computes internally, we included a print command at a suitable location in the SimBa code.

\begin{table}[h]
	\centering
	\small
	\textsf{
	\makebox[\textwidth][c]{
	\begin{tabular}{|l|*{10}{r|}}
\cline{7-11}
\multicolumn{6}{l|}{}
	& \multicolumn{3}{c|}{\texttt{Sophia + PHAT}}	& \multicolumn{2}{c|}{\texttt{Simpers}} \\
\cline{2-11}
\multicolumn{1}{l|}{}
	& \makecell*[c]{$c$}	& \makecell*[c]{$n$}	& \makecell*[c]{$n_0$}	& \makecell*[c]{$\towerdim$}	& \makecell*[c]{$\towerwidth$}
	& \makecell*[c]{filtration \\ size}	& \makecell*[c]{time \\ (s)}	& \makecell*[c]{mem. peak (kB) \\ \texttt{Sophia} / total}
	& \makecell*[c]{time \\ (s)}	& \makecell*[c]{mem. peak \\ (kB)} \\
\hline
\texttt{data1}
	& 495			& 4\,833		& 500			& 4				& 2\,908
	& 19\,747				& 0.07				& 4\,752 / 6\,472
	& 0.54				& 10\,030 \\
\texttt{data2}
	& 795			& 7\,978		& 800			& 4				& 4\,816
	& 35\,253				& 0.20				& 5\,286 / 9\,259
	& 2.82				& 19\,876 \\
\texttt{data3}
	& 794			& 8\,443		& 800			& 5				& 5\,155
	& 38\,101				& 0.21				& 5\,638 / 9\,487
	& 3.88				& 25\,104 \\
\texttt{GPS}
	& 1\,746		& 8\,585		& 1\,747		& 3				& 1\,747
	& 9\,063				& 0.02				& 4\,234 / 5\,027
	& 0.07				& 5\,849 \\
\texttt{KB}
	& 22\,499		& 95\,019		& 22\,500		& 3				& 22\,500
	& 133\,433				& 0.30				& 10\,036 / 14\,484
	& 0.51				& 25\,392 \\
\texttt{MC}
	& 23\,074		& 143\,928		& 23\,075		& 3				& 28\,219
	& 185\,447				& 0.51				& 13\,730 / 18\,792
	& 0.77				& 26\,718 \\
\texttt{S3}
	& 252\,995		& 1\,473\,580		& 252\,996		& 4				& 252\,996
	& 1\,824\,461				& 8.50				& 85\,128 / 151\,860
	& 10.78				& 247\,956 \\
\texttt{PC25}
	& 14\,999		& 10\,246\,125		& 15\,000		& 3				& 2\,191\,701
	& 12\,283\,003				& 135.02			& 994\,400 / 1\,439\,664
	& $\infty$			&- \\
\hline
	\end{tabular}
	}
	}
	\caption{Experimental results. The symbol $\infty$ means that the calculation time exceeded 12 hours.}\label{tab:resultsAlg1}
\end{table}

To verify that the space consumption of our algorithm
does not dependent on the length of the tower, 
we constructed an additional example whose size exceeds our RAM capacity, but 
whose width is small:
we took 10 random points moving on a flat torus 
in a randomly chosen fixed direction.
When two points get in distance less than $t_1$ to each other,
we add the edge between them (the edge remains also if the points
increase their distance later on). When two points get in distance less than
$t_2$ from each other with $t_2 < t_1$, we contract the two vertices
and let a new moving point appear somewhere on the torus.
This process yields a sequence of graphs, and we take its flag complex
as our simplicial tower. In this way, we obtain a tower
of length about $3.5\cdot 10^9$ which has a file size of about $73$ GB,
but only has a width of $367$.
Our algorithm took about $2$ hours to convert this tower into a filtration
of size roughly $4.6\cdot 10^9$. During the conversion,
the virtual memory used was constantly around $22$ MB and the resident set 
size about $3.8$ MB only, confirming the theoretical prediction that
the space consumption is independent of the length of the tower.
The information about the memory use was taken from the 
'/proc/\textless pid\textgreater/stat' system file every 100\,000 insertions/contractions during the process.

\subsection{Tightness and lower bounds}\label{ssec:tightness_bounds}

The conversion theorem (Theorem~\ref{thm:main_theorem_1}) yields
an upper bound of $O(\towerdim \cdot n\log n_0)$ for the size of a filtration
equivalent to a given tower. It is natural to ask whether this bound can
be improved. In this section, we assume for simplicity that the maximal
dimension $\towerdim$ is a constant. In that case, it is not difficult
to show that our size analysis cannot be improved:

\begin{proposition} \label{prop:tightness}
	There exist an example of a tower with $n$ simplices and $n_0$ vertices
	such that our construction yields a filtration
	of size $\Omega(n\log n_0)$.
\end{proposition}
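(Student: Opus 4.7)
The plan is to construct a tower whose contracting forest is a perfectly balanced binary tree of height $k := \log_2(n_0/2)$ and whose simplicial structure forces the coning cost at every level $j$ of the forest to be $\Theta(2^j)$, producing a total coning cost of $\Theta(k \cdot 2^k) = \Theta(n \log n_0)$. Concretely, fix $k \geq 1$ and set $n_0 := 2^{k+1}$. The tower first inserts $2^k$ \emph{main} vertices $v_1, \ldots, v_{2^k}$ together with $2^k$ \emph{private auxiliary} vertices $a_1, \ldots, a_{2^k}$, then inserts the $2^k$ edges $\{v_i, a_i\}$, and finally performs $k$ rounds of elementary contractions on the main vertices arranged as a complete binary tree: round $j$ (for $j = 1, \ldots, k$) contracts the $2^{k-j+1}$ currently surviving representatives pairwise following the sibling pattern of the tree. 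The tower has dimension $\towerdim = 1$ and the total number of inclusions is $n = 2^{k+1} + 2^k = 3 \cdot 2^k$.

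The main lemma, to prove by induction on $j$, is that after round $j$ every surviving representative $w$ has active closed star of size exactly $s_j := 2^{j+1} + 1$, and the active closed stars of representatives belonging to distinct subtrees of the contracting forest are pairwise disjoint. The base case $j = 0$ is immediate: $\cstar(v_i) = \{\{v_i\}, \{a_i\}, \{v_i, a_i\}\}$ has $s_0 = 3$ simplices, and distinct main vertices have disjoint stars thanks to the private auxiliaries. For the inductive step, consider a round-$j$ contraction of two representatives $u$ and $w$ with $|\activestar(u)| = |\activestar(w)| = s_{j-1}$ and disjoint active closed stars. Since both stars have the same size, by symmetry our construction can be taken to cone $\activestar(u)$ onto $w$, adding one simplex $w \join \sigma$ for each $\sigma \in \activestar(u)$. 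Disjointness forces all of these $s_{j-1}$ simplices to be genuinely new, since $\sigma \cup \{w\}$ being already in the complex would put $\sigma$ into $\cstar(w)$, contradicting disjointness; thus the contraction costs exactly $s_{j-1}$ simplices. After $u$ is declared inactive, $\activestar(w)$ acquires both the $2^{j-1}$ new cofaces of the form $w \join \sigma$ with $u \notin \sigma$ and the corresponding $2^{j-1}$ link simplices $\sigma$ themselves (which were not in $\cstar(w)$ by disjointness but now enter as faces of new cofaces of $w$), so $|\activestar(w)|$ grows from $s_{j-1}$ to $2 s_{j-1} - 1 = s_j$. Disjointness of $\activestar(w)$ with stars of representatives in other subtrees is inherited because such subtrees involve disjoint families of auxiliary vertices.

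With the invariant established, round $j$ contributes at least $2^{k-j} \cdot s_{j-1} \geq 2^{k-j} \cdot 2^j = 2^k$ simplices to the filtration, so the total contraction cost is at least $k \cdot 2^k$ and the filtration has size at least $n + k \cdot 2^k = \Omega(k \cdot 2^k) = \Omega(n \log n_0)$. The main technical subtlety throughout is the disjointness half of the invariant: if the active closed stars of two sibling representatives overlapped, some simplex $w \join \sigma$ could already lie in the complex, which would both reduce the contraction cost and prevent the active star of the surviving representative from truly doubling. Giving each main vertex its own private auxiliary vertex keeps the combinatorics of different subtrees entirely decoupled, which is precisely what makes the naive doubling analysis tight.
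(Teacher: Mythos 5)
Your construction is essentially the paper's own example with the roles renamed: $2^k$ vertices each joined by an edge to a private partner, contracted along a perfectly balanced binary tree, with the cost per level-$j$ contraction shown to be $\Theta(2^j)$ so that each of the $\log_2(n_0/2)$ levels contributes $\Theta(2^k)$ simplices. The argument is correct (your explicit tracking of the active-closed-star sizes and the disjointness invariant is a slightly more detailed version of the paper's charge-counting of new edges), so no further comment is needed.
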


\begin{proof}
	Let $p = 2^k$ for some $k\in\N$. Consider a graph with $p$ edges
	$(a_i,b_i)$, with $a_1,\ldots,a_p$, $b_1,\ldots,b_p$ $2p$ distinct vertices.
	Our tower first constructs this graph with inclusions (in an arbitrary order).
	Then, the $a$-vertices are contracted in a way such that the contracting forest
	is a fully balanced binary tree~-- see Figure~\ref{fig:tightness_dim1}
	for an illustration.
	
	\begin{figure}[h]
		\centering

		\includegraphics[width=100mm]{./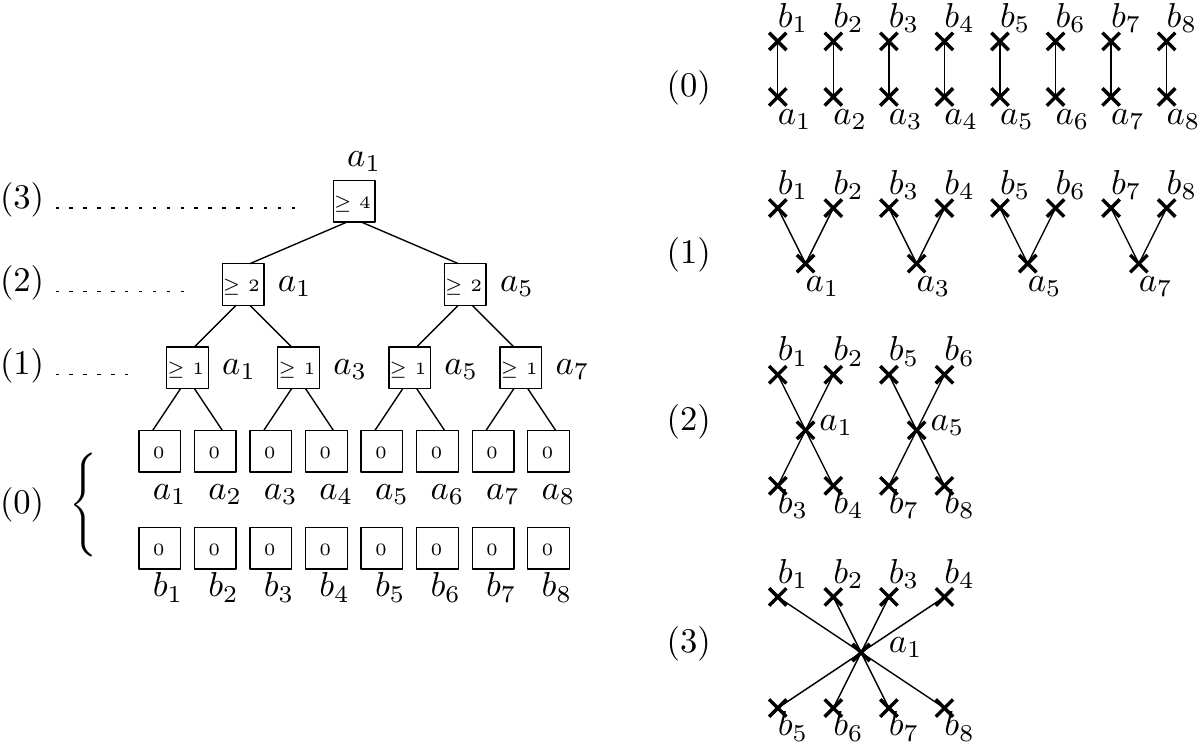}

		\caption{[In proof of Proposition~\ref{prop:tightness}] Sequence of contractions in the described construction for $p = 8$ (right)
		and the corresponding contracting forest (left), whose nodes contains the cost of the contractions} 
		\label{fig:tightness_dim1}
	\end{figure}

	To bound the costs, it suffices to count the number of edges added between 
	$a$-vertices and $b$-vertices in each step. We call them $ab$-edges
	from now on. Define the \emph{level}
	of a contraction to be its level in the contracting tree, where $0$ is the
	level of the leaves, and $k$ is the level of the root.
	On level $1$, a contraction of $a_i$ and $a_j$ yields exactly one new $ab$-edge, either $(a_i,b_j)$ or $(a_j,b_i)$. 
	The resulting contracted vertex has two incident $ab$-edges. A contraction on level $2$ 
	yields two novel $ab$-edges, and a vertex with $4$ incident $ab$-edges.
	By a simple induction, we observe that a contraction on level $i$
	introduces $2^{i-1}$ new $ab$-edges, 
	and hence has a cost of at least $2^{i-1}$, for $i=1,\ldots,k$.
	This means that the sum of the costs of all level $i$ contractions
	is exactly $\frac{p}{2}$. Summing up over all $i$ yields a cost of at least
	$k\frac{p}{2}$. The result follows because $k = \log (n_0/2)$ and $p = n/3$.
\end{proof}

Another question is whether a different approach could convert a tower
into a filtration with an (asymptotically) smaller number of simplices.
For this question, consider the \emph{inverse persistence computation problem}:
given a barcode, find a filtration of minimal size which realizes this barcode.
Note that solving this problem results in a simple solution for the conversion
problem: compute the barcode of the tower first using an arbitrary algorithm;
then compute a filtration realizing this barcode.
While useful for lower bound constructions, 
we emphasize the impracticality of this solution, as the main purpose
of the conversion is a faster computation of the barcode.

Let $b$ be the number of bars of a barcode. 
An obvious lower bound for the filtration size is $\Omega(b)$,
because adding a simplex causes either the birth or the death 
of exactly one bar in the barcode. 
For constant dimension, $O(b)$ is also an upper bound:

\begin{lemma} \label{lem:constr_from_barcode}
	For a barcode with $b$ bars and maximal dimension $\Delta$,
	there exists a filtration of size $\leq 2^{\Delta+2} b$ realizing this barcode.
\end{lemma}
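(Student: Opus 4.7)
The plan is to realize each bar by a small self-contained ``gadget'' built on an almost disjoint set of fresh vertices, with all gadgets wedged at a single common \emph{anchor} vertex so that their homological contributions combine additively.

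More concretely, since any nonempty simplicial complex has $\beta_0\ge 1$, the barcode must contain at least one infinite $0$-bar; I would designate one such bar and call its birth index $b^\ast$. At step $b^\ast$ the filtration inserts a single fresh vertex $a$. For every remaining bar $(b_i,d_i)$ of dimension $p_i$, allocate $p_i+1$ fresh vertices $u^{(i)}_1,\ldots,u^{(i)}_{p_i+1}$ (disjoint from those of any other gadget) and consider the $(p_i+1)$-simplex $\tau_i:=\{a,u^{(i)}_1,\ldots,u^{(i)}_{p_i+1}\}$. At step $b_i$ the filtration inserts every proper nonempty face of $\tau_i$ other than $\{a\}$, which is $2^{p_i+2}-3$ simplices; geometrically this attaches the boundary of $\tau_i$ (a $p_i$-sphere) to $a$, while for $p_i=0$ it degenerates to adding a single disconnected fresh vertex. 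If $d_i$ is finite, insert $\tau_i$ itself at step $d_i$, adding one more simplex (which for $p_i=0$ is just the edge $\{a,u^{(i)}_1\}$). Thus each gadget contributes at most $2^{p_i+2}-2\le 2^{\Delta+2}$ simplices, and the anchor is charged to the designated infinite $0$-bar, so the total filtration size is bounded by $2^{\Delta+2}\,b$.

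Correctness reduces to checking that this filtration has exactly the prescribed barcode. At every time $t$, the complex is a wedge at $a$ of $p$-spheres and $(p+1)$-balls (one summand for each bar of positive dimension, a sphere while $b_i\le t<d_i$ and a contractible ball once $t\ge d_i$), together with a disjoint union of isolated vertices, one per live $0$-bar other than the anchor. By the standard additivity of reduced homology on wedges, $\dim H_p$ at time $t$ equals the number of dimension-$p$ bars alive at $t$ (with the usual $+1$ in dimension $0$ coming from the anchor component). Because the filtration maps are inclusions and each gadget contributes an independent generator that can be tracked explicitly over the interval $[b_i,d_i)$, the persistence module decomposes as $\bigoplus_i I_{b_i,d_i}$ by the structure theorem for persistence modules.

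The main subtlety, and the reason for introducing the anchor, is the zero-dimensional case. A naive gadget built on entirely disjoint fresh vertices would, for every $p$-bar, create a spurious new connected component and hence an unwanted extra $0$-bar; conversely, a $0$-bar cannot be created and then destroyed on a fresh vertex set in isolation, since a nonempty complex always has positive $0$-homology. Wedging every gadget to the single anchor vertex $a$ simultaneously absorbs the extra $0$-classes of the higher-dimensional gadgets and provides the necessary ``older'' component into which $0$-bars can die, all while keeping the per-gadget cost within the required $2^{\Delta+2}$ budget.
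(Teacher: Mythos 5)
Your construction is essentially identical to the paper's: a single anchor vertex (the paper's $v_0$) to which each bar's gadget is wedged, the boundary of a $(k+1)$-simplex inserted at birth and the full simplex at death, with the same per-bar count of at most $2^{k+2}-2\le 2^{\Delta+2}$ simplices. The proposal is correct.
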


\begin{proof}
	Begin with an empty complex. Now consider the birth and death times represented by the barcode one by one. 
	The first birth will be the one of a $0$-dimensional class, 
	so add a vertex $v_0$ to the complex. 
	From now, every time a $0$-dimensional homology class is born add a new vertex to the complex. When a $0$-dimensional homology class dies, 
	link the corresponding vertex to $v_0$ with an edge.

	When a $k$-dimensional homology class is born, with $k > 0$, add the boundary of a $(k+1)$-simplex to the complex 
	that is incident to $v_0$ and to $k$ novel vertices. When this homology class dies, 
	add the corresponding $(k+1)$-simplex.
	This way, the resulting filtration realizes the barcode. 
	For a bar of the barcode in dimension $k$, we have to add all proper faces of a $(k+1)$-simplex (except for one vertex). 
	Since that number is at most $2^{k+2}-3$, the result follows.
\end{proof}

If a tower has length $m$, what is the maximal 
size of its barcode?
If the size of the barcode is $O(m)$, the preceding lemma
implies that a conversion to a filtration of linear size is possible
(for constant dimension).
On the other hand, any example of a tower yielding a super-linear lower
bound would imply a lower bound for any conversion algorithm, because
a filtration has to contain at least one simplex per bar in its barcode.

To approach the question, observe that
a single contraction might destroy many homology classes at once:
consider a ``fan'' of $t$ empty triangles, all glued together along a common
edge $ab$ (see Figure~\ref{fig:contr_and_homology} (a)). 
Clearly, the complex has $t$ generators in $1$-homology. 
When $a$ and $b$ are contracted, the complex transforms to a star-shaped graph
which is contractible.
Moreover, a contraction can also create many homology classes at once:
consider a collection of $t$ disjoint triangulated spheres, all glued together along
a common edge $ab$. For every sphere, remove one of the triangles incident
to $ab$ (see Figure~\ref{fig:contr_and_homology} (b)). 
The resulting complex is acyclic. The contraction of the edge $ab$
``closes'' each of the spheres, and the resulting complex has $t$ generators
in $2$-homology.
Finally, a contraction might not not affect the homology at all~--
see Figure~\ref{fig:contr_and_homology} (c).

\begin{figure}[h]
	\centering

	\includegraphics[width=84mm]{./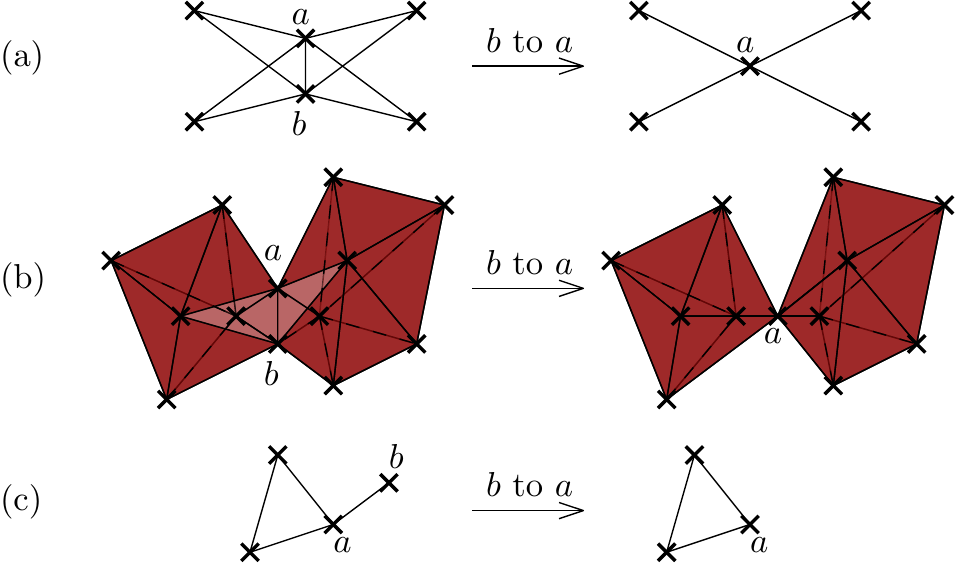}

	\caption{Examples of the influence of contractions on the homology classes: 
	(a) destruction of four $1$-homology generators, (b) creation of two $2$-homology generators, and (c) no influence at all} 
	\label{fig:contr_and_homology}
\end{figure}

The above examples show that a single contraction can create and destroy 
many bars.
For a super-linear bound on the barcode size, however, we would 
have to construct an example where sufficiently many contractions create
a large number of bars. 
So far, we did neither succeed in constructing such an example,
nor are we able to show that such an example does not exist.

\section{Persistence by Streaming}\label{sec:persistence_by_streaming}

Even if the complex we need to maintain in memory during the conversion is relatively small, at the end, the algorithm to compute the final 
persistence still needs to memorize the whole filtration we give it as input. If the complex in the original tower has an interesting maximum size 
during the whole process, we should be able to compute its persistence even if the tower is extremely long. So we design here a streaming variation 
of the reduction algorithm that computes the barcode of filtrations, such that it has an efficient memory use. More precisely, we will prove 
the following theorem:

\begin{theorem} \label{thm:main_theorem_2}
	With the same notation as in Theorem~\ref{thm:main_theorem_2}, 
	we can compute the barcode of a tower $\tower$ in worst-case time
	$O(\towerwidth^2\cdot\towerdim\cdot n\cdot\log n_0)$
	and space complexity $O(\towerwidth^2)$
\end{theorem}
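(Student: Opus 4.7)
The plan is to couple the conversion algorithm of Theorem~\ref{thm:main_theorem_1} with a streaming variant of the compressed persistence reduction. As each new simplex of $\filtration$ is emitted by the conversion, its boundary column is immediately reduced against whatever matrix is currently in memory. The key idea is that the notion of \emph{inactive} simplex from Section~\ref{ssec:active_and_small_coning} should, at the level of the reduction, play a role analogous to that of a \emph{negative} simplex in ordinary compression: since an inactive simplex can never again appear as a face of a later $\sigma_k$, it can be purged both from the column index and from the sparse representation of the surviving columns.

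First I would organise the live matrix by pivots rather than column indices: for each positive simplex $p$ that is currently active, store either its own (zero) column if $p$ is still unpaired, or the reduced column $R_\sigma$ with pivot $p$ obtained when the matching negative $\sigma$ was processed. Upon reading an elementary contraction, the conversion marks a vertex (and hence a set of simplices already in $\hat{\complex}_i$) as inactive; I would process each newly inactive simplex immediately. A negative inactive simplex requires no action beyond possibly being discarded later, since its column is kept as the witness of its (possibly still active) pivot. A positive unpaired inactive $p$ yields an infinite bar that we output and then discard. A positive inactive $p$ already paired with $\sigma$ triggers a \emph{clearing} step: we add $R_\sigma$ to every live column containing $p$ as a non-pivot entry~-- this only removes $p$ and never alters another column's pivot, because any such target has a pivot strictly greater than $p$~-- after which both $p$ and $R_\sigma$ are deleted from memory. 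Reduction of newly arrived columns is the standard left-to-right procedure, with both ordinary compression and this new clearing applied eagerly.

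For correctness, I would argue by induction over the stream that after any clearing of a pivot $p$, no live column contains $p$ and no subsequent step can reintroduce it: no later $\sigma_k$ has $p$ in its original boundary since $p$ is inactive, and every later reduction only combines live columns, all $p$-free by induction. All modifications are left-to-right column additions, so the pivots of the final matrix are unchanged; the finite pairs output along the way, together with the infinite pairs emitted at inactivation time and those still alive at the end of the stream, coincide with the barcode of $\filtration$, hence with that of $\tower$ by Proposition~\ref{prop:barcode_equiv}. The main obstacle, in my view, is verifying that the two compression rules compose cleanly with ongoing reductions: one needs to check that after every contraction the invariant ``every row entry in every live column is an active positive simplex'' is restored and is preserved under subsequent reductions.

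Granted this invariant, the complexity bounds follow almost by bookkeeping. The number of live columns is at most the number of active positive simplices, which is bounded by $\towerwidth$, and each column has at most $\towerwidth$ entries, giving space $O(\towerwidth^2)$. Reducing one new column costs at most $\towerwidth$ left-to-right additions of $O(\towerwidth)$ cost each; each clearing sweeps $O(\towerwidth)$ live columns at $O(\towerwidth)$ each; both operations cost $O(\towerwidth^2)$. Since there are $O(|\hat{\complex}_m|) = O(\towerdim \cdot n \log n_0)$ new columns and at most as many clearings, and since the conversion itself contributes only the lower-order cost bounded by Proposition~\ref{prop:complexity}, the total running time is $O(\towerwidth^2\cdot\towerdim\cdot n\log n_0)$ as claimed.
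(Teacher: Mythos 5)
Your overall architecture matches the paper's: convert on the fly, reduce incrementally with compression, and exploit inactivation to purge rows and columns, with correctness resting on the fact that clearing an inactive pivot cannot resurface later and does not disturb other pivots. Two points, one minor and one substantive.

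Minor: your claim that ``all modifications are left-to-right column additions'' is false for the clearing step, since the column with pivot $p$ may have a \emph{larger} index than the live columns it is added to. Your local argument (any column containing $p$ as a non-pivot entry has pivot $>p$, and the added column has no entry above $p$) does show pivots are preserved, but to conclude that the final pivots agree with those of a genuine reduction you still need something like the paper's observation that every live column remains a linear combination of the standard reduced columns $R_1,\ldots,R_{j-1}$, so that each new reduction is simulable by left-to-right additions.

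Substantive gap: the invariant you rest the space bound on --- ``every row entry in every live column is an active positive simplex'' --- is not achievable by the operations you describe, and is in fact false. An \emph{inactive positive unpaired} simplex $p$ (one whose homology class is still alive when $p$ becomes inactive) has a zero column, so there is no reduced column with pivot $p$ available to clear row $p$; yet $p$ can persist as a row entry in live columns of later simplices that had $p$ as a facet. Your proposal ``outputs an infinite bar and discards'' $p$, but that removes nothing from the surviving columns. Consequently the bound of $\towerwidth$ rows per column does not follow from your invariant, and with it the $O(\towerwidth^2)$ space bound and the $O(\towerwidth)$ cost per column addition are unproven. The paper closes exactly this hole with a separate argument: the number of such stuck rows is at most the total Betti number of the current intermediate complex $\hat{\othercomplex}$, which is at most $2\towerwidth$ because $\hat{\complex}_i$ collapses to $\complex_i$ (so $\beta(\hat{\complex}_i)\leq|\complex_i|\leq\towerwidth$) and at most $\towerwidth$ further simplices separate $\hat{\complex}_i$ from $\hat{\othercomplex}$, each raising the Betti sum by at most one. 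You need this (or an equivalent) argument; without it the theorem's complexity claims do not follow. Note also that the count of live columns between checkpoints is bounded by $2\towerwidth$ rather than $\towerwidth$, since up to $|\hat{\complex}_{i+1}\setminus\hat{\complex}_i|\leq\towerwidth$ columns arrive before the next checkpoint; this only affects constants.
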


We describe the algorithm in Section~\ref{ssec:reduction_algorithm} and prove the complexity bounds
in Section~\ref{ssec:reduction_complexity}. The described algorithm requires various adaptations
to become efficient in practice, and we describe an improved variant in Section~\ref{ssec:reduction_implementation}.
We present some experiments in Section~\ref{ssec:reduction_experiments}.

\subsection{Algorithmic description} \label{ssec:reduction_algorithm}

On a high level, our algorithm converts the tower into an equivalent filtration 
and computes the barcode of that filtration. 
We focus on the second part, which we describe as a streaming algorithm.
The input to the algorithm is a stream of elements, each starting with a token \{\texttt{ADDITION}, \texttt{INACTIVE}\}
followed by a simplex identifier which represents a simplex $\sigma$.
In the addition case, this is followed by a list of simplex identifiers specifying the facets of $\sigma$.
In other words, the element encodes the next column of the boundary matrix.
For the inactive case, it means that $\sigma$ has become inactive in the complex. 
In particular, no subsequent simplex in the stream will contain $\sigma$ as a facet.
It is not difficult to modify the algorithm from Section~\ref{ssec:algorithm_for_conversion}
to return a stream as required, within the same complexity bounds.

The algorithm uses a matrix data type $\dsmatrix$ as its main data structure.
We realize $\dsmatrix$ as a dictionary of columns, indexed
by a simplex identifier. Each column is a sorted linked list of identifiers
corresponding to the non-zero row indices of the column.
In particular, we can access the pivot of the column in constant time
and we can add two columns in time proportional to the maximal size
of the involved lists. Note that most algorithms store the boundary matrix
as an array of columns, but we use dictionaries for space efficiency.

There are two secondary data structures that we mention briefly:
given a row index $r$, we have to identify the column index $c$
of the column that has $r$ as pivot in the matrix (or to find out
that no such column exists). This can be done using a dictionary
with key and value type both simplex identifiers. 
Finally, we maintain a dictionary
representing the set of simplex identifiers that represent active
simplices of the complex, plus a flag
denoting whether the corresponding simplex
is positive or negative. 
It is straight-forward to maintain these
structures during the algorithm, and we will omit the description
of the required operations.

The algorithm uses two subroutines. The first one, called \texttt{reduce\_column},
takes a column identifier $j$ as input
is defined as follows: iterate through the non-zero row indices of $j$.
If an index $i$ is the index of an inactive and negative column in $\dsmatrix$,
remove the entry from the column $j$ (this is the ``compression'' 
described at the end of Section~\ref{sec:background}). After this pre-processing, reduce the column:
while the column is non-empty, and its pivot $i$ is the pivot of another
column $k<j$ in the matrix, add column $k$ to column $j$.

The second subroutine, \texttt{remove\_row}, takes a index $\ell$ as input
and clears out all entries in row $\ell$ from the matrix.
For that, let $j$ be the column with $\ell$ as pivot.
Traverse all non-zero columns of the matrix except column $j$.
If a column $i \neq j$ has a non-zero entry at row $\ell$,
add column $j$ to column $i$.
After traversing all columns, remove column $j$ from $\dsmatrix$.

The main algorithm can be described easily now: if the input stream
contains an addition of a simplex, we add the column to $\dsmatrix$
and call \texttt{reduce\_column} on it.
If at the end of that routine, the column is empty, it is removed from $\dsmatrix$.
If the column is not empty and has pivot $\ell$,
we report $(\ell,j)$ as a persistence pair and
check whether $\ell$ is active. 
If not, we call \texttt{remove\_row($\ell$)}.
If the input stream specifies that simplex $\ell$ becomes inactive,
we check whether $j$ appears as pivot in the matrix and call 
\texttt{remove\_row($\ell$)} in this case.

\begin{proposition} \label{prop:alg_correctness}
	The algorithm computes the correct barcode.
\end{proposition}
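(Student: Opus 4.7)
The plan is to establish correctness by comparing the streaming algorithm to the standard compressed reduction applied to the boundary matrix of the converted filtration. Two modifications distinguish the two procedures: first, empty (positive) columns are immediately deleted from $\dsmatrix$; second, once the algorithm notices that a simplex $\ell$ is inactive and appears as the pivot of some column $j$, the subroutine \texttt{remove\_row}$(\ell)$ zeroes out row $\ell$ across the remaining columns and deletes column $j$. The goal is to show that neither modification changes the set of pivot pairs eventually reported.

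The key invariant I would maintain is that, after each stream element has been processed, for every column still present in $\dsmatrix$ its pivot agrees with the pivot of the corresponding column in the fully reduced boundary matrix, and all persistence pairs reported so far are genuine. The first modification is easy to justify: an empty column has no pivot and therefore does not participate in any subsequent left-to-right addition, so deleting it leaves the behavior of all later \texttt{reduce\_column} calls untouched.

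The main obstacle is verifying that the second modification is safe. At the moment \texttt{remove\_row}$(\ell)$ is triggered, the pair $(\ell, j)$ is a correct birth-death pair by the invariant. For any other column $i$ in $\dsmatrix$ whose row $\ell$ entry is non-zero, the pivot of column $i$ must be strictly greater than $\ell$: pivots of reduced columns are pairwise distinct, and since $\ell$ is a non-zero row of column $i$ its pivot index is at least $\ell$ but cannot equal it. Adding column $j$ to column $i$ therefore cancels the row $\ell$ entry without affecting the pivot of column $i$. I would then argue that after this cleanup and the removal of column $j$, all subsequent reductions behave identically to those of the standard compressed algorithm: because $\ell$ is inactive, no future simplex in the stream contains $\ell$ as a facet, so newly added boundary columns have zero at row $\ell$; combined with the fact that no column currently in $\dsmatrix$ has a non-zero entry at row $\ell$, no future column addition can reintroduce row $\ell$. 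Hence column $j$ would never again serve as a pivot-bearing column in any subsequent reduction, and its removal is without loss.

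An induction on the length of the stream then shows that the multiset of pairs produced by the streaming algorithm coincides with the multiset of pivot pairs of the reduced boundary matrix of the filtration, which by Proposition~\ref{prop:barcode_equiv} equals the barcode of $\tower$.
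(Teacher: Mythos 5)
Your overall structure tracks the paper's proof closely: you correctly note that deleting an empty column is harmless, that once $\ell$ is inactive no future boundary column can have an entry in row $\ell$, that \texttt{remove\_row} clears every existing entry in that row so it can never be reintroduced by later additions, and that adding column $j$ to a column $i$ containing $\ell$ does not change the pivot of column $i$ (its pivot is strictly larger than $\ell$). All of this is sound and is, in substance, the first half of the paper's argument.

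The gap is the claim that ``after this cleanup and the removal of column $j$, all subsequent reductions behave identically to those of the standard compressed algorithm.'' They do not. The additions inside \texttt{remove\_row} are \emph{right-to-left} additions (column $j$ is added to columns $i$ with $i<j$), so the columns stored in $\dsmatrix$ now differ from those of the standard run in their non-pivot entries, even though their pivots agree. When a later call to \texttt{reduce\_column}$(k)$ adds such a modified column to column $k$, the intermediate pivots encountered, and hence the set and order of columns subsequently added, can differ from the standard algorithm's run. Your invariant --- that the pivots of stored columns agree with those of the reduced matrix --- is therefore too weak to carry the induction: correctness of the pivot eventually reported for column $k$ requires that everything added to $\boundarymatrix_k$ lies in the span of $\boundarymatrix_1,\dots,\boundarymatrix_{k-1}$, not merely that the added columns have the right pivots. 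The paper closes exactly this hole by strengthening the invariant: every column currently held in $\dsmatrix$ is a linear combination of the columns $\rmatrix_1,\dots,\rmatrix_{j-1}$ of the standard compressed reduction (a property preserved both by \texttt{reduce\_column} and by the right-to-left additions in \texttt{remove\_row}, since all indices involved precede any future column index). Hence the net effect of reducing a new column can be realized by left-to-right additions in $\rmatrix$, and the uniqueness of pivots over all reductions yields the correct pairing. You need this linear-combination invariant (or an equivalent rank-based argument) to make your last paragraph go through.
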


\begin{proof}
	First, note that removing a column from $\dsmatrix$ within the 
	procedure \texttt{remove\_row} does not affect further reduction steps:
	Since the pivot $\ell$ of the column is inactive, no subsequent column
	in the stream will have an entry in row $\ell$. Moreover, the reduction
	process cannot introduce an entry in row $\ell$ because the routine has removed
	all such entries.

        Note that \texttt{remove\_row} might also include right-to-left column additions,
        and we also have to argue that they do not change the pivots.
        Let $\rmatrix$ denote the matrix obtained by the standard compression algorithm
        that does not call \texttt{remove\_row} (as described in Section~\ref{sec:background}).
        Just before our algorithm calls \texttt{reduce\_column}
        on the $j$-th column, let $\otherrmatrix$ denote the matrix with $(j-1)$ columns
        that is represented by $\dsmatrix$. It is straight-forward to verify by an inductive argument
        that every column of $\otherrmatrix$ is a linear combination of $\rmatrix_1,\ldots,\rmatrix_{j-1}$.
        \texttt{reduce\_column} adds a subset of the columns of $\otherrmatrix$ to the $j$-th column.
        Thus, the reduced column can be expressed by a sequence of left-to-right column additions
        in $\rmatrix$, and thus yields the same pivot as the standard compression algorithm.
\end{proof}

\subsection{Complexity analysis}\label{ssec:reduction_complexity}

We analyze how large the structure $\dsmatrix$ can become during the algorithm.
After every iteration, the matrix represents the reduced boundary matrix of some
intermediate complex $\hat{\othercomplex}$ with
$\hat{\complex}_i\subseteq\hat{\othercomplex}\subseteq\hat{\complex}_{i+1}$
for some $i=0,\ldots,m$. Moreover, the active simplices define
a subcomplex $\othercomplex\subseteq\hat{\othercomplex}$
and there is a moment during the algorithm where $\hat{\othercomplex}=\hat{\complex}_i$
and $\othercomplex=\complex_i$, for every $i=0,\ldots,m$. 
We call this the \emph{$i$-th checkpoint}.
We will make frequent use of the following simple observation.

\begin{lemma}\label{lem:intermediate_size}
	$|\hat{\complex}_{i+1} \setminus \hat{\complex}_i| \leq |\complex_i| \leq \towerwidth$
\end{lemma}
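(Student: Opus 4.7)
The second inequality $|\complex_i|\leq\towerwidth$ is immediate from the definition of the width of a tower, so the content is the first inequality $|\hat{\complex}_{i+1}\setminus\hat{\complex}_i|\leq|\complex_i|$. My plan is a straightforward case analysis on the type of the simplicial map $\smap_i$, following the construction from Section~\ref{ssec:active_and_small_coning}.

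If $\smap_i$ is an elementary inclusion, the construction sets $\hat{\complex}_{i+1}=\hat{\complex}_i\cup\{\sigma\}$, so exactly one simplex is added. The inequality then reduces to $1\leq|\complex_i|$, which holds whenever $\complex_i\neq\emptyset$; the sole exception is the case $\complex_0=\emptyset$ where the inclusion adds a single vertex and the bound $1\leq\towerwidth$ is trivial, so the overall claim of the lemma still holds.

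If $\smap_i$ is an elementary contraction $\contr{u}{v}{i}$, then by construction
\[
	\hat{\complex}_{i+1}=\hat{\complex}_i\cup\bigl(w\join\activestar(w',\hat{\complex}_i)\bigr),
\]
where $\{w,w'\}=\{u,v\}$ is chosen so that $w'$ has the smaller active closed star. Every simplex of $\hat{\complex}_{i+1}\setminus\hat{\complex}_i$ is therefore of the form $w\join\tau$ for some $\tau\in\activestar(w',\hat{\complex}_i)$, so
\[
	|\hat{\complex}_{i+1}\setminus\hat{\complex}_i|\leq|\activestar(w',\hat{\complex}_i)|.
\]
By Lemma~\ref{lem1}, the active simplices of $\hat{\complex}_i$ are precisely the simplices of $\complex_i$, so $\activestar(w',\hat{\complex}_i)=\cstar(w',\complex_i)$, which is a subcomplex of $\complex_i$ and thus has at most $|\complex_i|$ elements.

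There is no real obstacle here; the only subtlety is making sure to invoke Lemma~\ref{lem1} to convert the active closed star in $\hat{\complex}_i$ into an ordinary closed star in $\complex_i$, which is what allows us to bound the cost of each step by the width of the original tower rather than the (potentially much larger) size of $\hat{\complex}_i$.
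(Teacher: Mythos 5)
Your proof is correct and is exactly the argument the paper has in mind; the paper in fact states this lemma as a ``simple observation'' and omits the proof entirely. Your case analysis (one simplex per elementary inclusion; at most $|\activestar(w',\hat{\complex}_i)| = |\cstar(w',\complex_i)| \leq |\complex_i|$ new simplices per contraction, via Lemma~\ref{lem1}) is the intended justification, and your remark about the degenerate step $\complex_0=\emptyset$ is a fair, if pedantic, caveat that does not affect how the lemma is used.
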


\begin{lemma} \label{lem:matrix_width}
	At every moment, the number of columns stored in $\dsmatrix$ is at most $2\towerwidth$.
\end{lemma}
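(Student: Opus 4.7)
I would prove $|\dsmatrix| \leq 2\towerwidth$ by first establishing the invariant that at every moment, every column of $\dsmatrix$ has a distinct pivot corresponding to a currently active simplex. Distinctness of pivots follows from the standard reduction invariant maintained by \texttt{reduce\_column}. Activity of pivots is enforced by the two explicit checks in the main loop: after reducing a newly added column with pivot $\ell$, the algorithm calls \texttt{remove\_row}$(\ell)$ if $\ell$ is inactive, deleting that column; and when an \texttt{INACTIVE} token arrives for a simplex $\ell$ that is the pivot of some column, the same routine is invoked and removes exactly that column. Crucially, the right-to-left column additions performed inside \texttt{remove\_row} modify other columns only by zeroing out the entry in row $\ell$, and in particular do not change the pivot of any other column---this is the same observation that underlies the correctness proof in Proposition~\ref{prop:alg_correctness}. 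Hence the invariant survives every elementary step.

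Given the invariant, the lemma reduces to bounding the number of currently active simplices at any moment. By Lemma~\ref{lem1}, at the $i$-th checkpoint the active simplices of $\hat{\complex}_i$ are precisely those of $\complex_i$, so the count is at most $|\complex_i| \leq \towerwidth$. Between the $i$-th and $(i{+}1)$-th checkpoint, the only way for a simplex to become active is via an \texttt{ADDITION} token, and there are at most $|\hat{\complex}_{i+1} \setminus \hat{\complex}_i|$ such tokens, which is bounded by $|\complex_i| \leq \towerwidth$ by Lemma~\ref{lem:intermediate_size}. Therefore, at any intermediate moment the active count is at most $|\complex_i| + |\hat{\complex}_{i+1} \setminus \hat{\complex}_i| \leq 2\towerwidth$. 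Combining with the invariant, $|\dsmatrix|$ is at most the number of active simplices, which is $\leq 2\towerwidth$.

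The main obstacle is the invariant argument: one must check that no elementary step of the streaming algorithm can leave a column in $\dsmatrix$ whose pivot has just become inactive. By inspection, the only events that can threaten activity are (i) the reduction of a new column that produces an inactive pivot, and (ii) an \texttt{INACTIVE} token for a simplex currently serving as a pivot; both are immediately followed by the \texttt{remove\_row} cleanup. No other step creates a new pivot or transfers pivoting from one simplex to another, so the invariant cannot be violated between these explicit checks, and the $2\towerwidth$ bound follows.
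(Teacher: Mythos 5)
Your proof is correct and follows essentially the same route as the paper's: both rest on the invariant that every stored column is nonzero with a distinct active pivot, bound the count at the $i$-th checkpoint by $|\complex_i|\leq\towerwidth$, and use Lemma~\ref{lem:intermediate_size} to add at most $\towerwidth$ more between checkpoints. The only (harmless) difference is that you count active simplices at intermediate moments while the paper counts columns added since the last checkpoint; you also spell out the invariant's preservation, which the paper leaves as ``can be verified easily.''
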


\begin{proof}
	It can be verified easily that throughout the algorithm,
	a column is stored in $\dsmatrix$ only if not zero, and its pivot is active.
	So, assume first that we are at the $i$-th checkpoint for some $i$.
	Since $\othercomplex=\complex_i$, there are not more than $|\complex_i|\leq\towerwidth$
	active simplices. Since each column has a different active pivot, their number is
	also bounded by $\towerwidth$.
	If we are between checkpoint $i$ and $i+1$, there have been not more than
	$\towerwidth$ columns added to $\dsmatrix$ since the $i$-th checkpoint from Lemma~\ref{lem:intermediate_size}.
	The bound follows.
\end{proof}

The number of rows is more difficult to bound because we cannot guarantee that each column in $\dsmatrix$
corresponds to an active simplex. Still, the number of rows is asymptotically the same as for columns:

\begin{lemma} \label{lem:matrix_height}
	At every moment, the number of rows stored in $\dsmatrix$ is at most $4\towerwidth$.
\end{lemma}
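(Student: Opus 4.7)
The plan is to follow the blueprint of Lemma~\ref{lem:matrix_width}. I would split the row indices stored in $\dsmatrix$ into two disjoint groups: \emph{pivot rows}, i.e., indices that appear as the pivot of some column, and \emph{non-pivot rows}, i.e., indices that appear as a non-zero entry in some column but are the pivot of none. The goal is to bound each group by $2\towerwidth$ and add.

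The pivot rows are controlled directly by Lemma~\ref{lem:matrix_width}: every column of $\dsmatrix$ has a unique pivot and there are at most $2\towerwidth$ columns. For the non-pivot rows, I would prove the invariant that every non-pivot row corresponds to a simplex that is active at the current moment. The two cleanup mechanisms of the algorithm conspire to make this true: the compression step at the start of \texttt{reduce\_column} deletes any entry whose row is an inactive negative simplex, and \texttt{remove\_row} -- invoked as soon as a pivot becomes inactive -- wipes out that entire row from $\dsmatrix$ and removes the column it indexed. A short induction on the algorithm's steps then yields that after every step each non-pivot row of $\dsmatrix$ corresponds to an active simplex. Combining this invariant with the reasoning of Lemma~\ref{lem:matrix_width} (at the $i$-th checkpoint there are $|\complex_i|\leq \towerwidth$ active simplices, and at most $\towerwidth$ more can become active between consecutive checkpoints by Lemma~\ref{lem:intermediate_size}) gives at most $2\towerwidth$ non-pivot rows, and the total bound of $4\towerwidth$ follows.

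I expect the main obstacle to be the induction step for the invariant, specifically the right-to-left column additions performed inside \texttt{remove\_row}. These additions spread the non-pivot entries of the vanishing column across other columns, and one has to rule out that the spread introduces a fresh inactive row. The argument here is that the vanishing column itself satisfies the invariant at the moment of its removal, so all of its entries correspond to simplices that are active at that time; any of those that later becomes inactive will either be a pivot (and is then cleared by a further call to \texttt{remove\_row}) or a negative non-pivot row that is compressed out the next time it meets a \texttt{reduce\_column} call. Once this bookkeeping is carried out carefully, the final bound is a direct consequence of the two per-class estimates.
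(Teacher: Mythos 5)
Your decomposition into pivot and non-pivot rows is fine as bookkeeping, and the pivot half is correct (all pivots of columns in $\dsmatrix$ are active, so Lemma~\ref{lem:matrix_width} bounds them by $2\towerwidth$). The gap is in the invariant you propose for the non-pivot rows: it is \emph{not} true that every surviving non-pivot row corresponds to an active simplex. The case that falls through both of your cleanup mechanisms is a row index $\ell$ that is \emph{positive, unpaired, and inactive}. Such an $\ell$ had its column reduced to zero (so it is positive and its column was deleted), no later column acquired $\ell$ as pivot (so \texttt{remove\_row($\ell$)} is never invoked --- neither at pairing time nor when $\ell$ becomes inactive, since the algorithm only calls \texttt{remove\_row} when $\ell$ is currently the pivot of some stored column), and the compression step only deletes entries in rows of inactive \emph{negative} simplices. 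Yet reduced columns with larger pivots may perfectly well retain non-zero entries in row $\ell$. So row $\ell$ is a non-empty, non-pivot row belonging to an inactive simplex, contradicting your invariant; your closing paragraph explicitly enumerates only the ``pivot'' and ``negative'' subcases and misses this one.

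These surviving rows are precisely the generators of essential homology classes of the intermediate complex $\hat{\othercomplex}$ currently represented by $\dsmatrix$, and the paper bounds their number by the total Betti number $\sum_{p}\beta_p(\hat{\othercomplex})$. That quantity is at most $2\towerwidth$ because $\beta(\hat{\complex}_i)=\beta(\complex_i)\leq|\complex_i|\leq\towerwidth$ by the collapse Lemma~\ref{lem3}, and at most $\towerwidth$ further simplex insertions (each raising the Betti sum by at most one) separate $\hat{\complex}_i$ from $\hat{\othercomplex}$ by Lemma~\ref{lem:intermediate_size}. Adding this $2\towerwidth$ to the $2\towerwidth$ bound on active rows gives the stated $4\towerwidth$. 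This Betti-number argument is the idea your proof is missing, and it cannot be replaced by the cleanup bookkeeping alone.
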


\begin{proof}
	Consider a row index $\ell$ and a time in the algorithm where $\dsmatrix$ represents $\hat{\othercomplex}$. 
	By the same argument as in the previous lemma, we can argue that there are at most $2 \towerwidth$ active
	row indices at any time. 
        Therefore, we restrict our attention to the case that $\ell$ is inactive
	and distinguish three cases. If $\ell$ represents a negative simplex, 
        we observe that its row should have been was removed
        due to the compression optimization,
	and after $\ell$ became inactive, no simplex can have it as a facet either.
	It follows that row $\ell$ is empty in this case.
	If $\ell$ is positive and was paired with another index $j$ during the algorithm,
	then \texttt{remove\_row} was called on $\ell$, either at the moment the pair
	was formed, or when $\ell$ became inactive. Since the procedure removes the row,
	we can conclude that row $\ell$ is empty also in this case.
	The final case is that $\ell$ is positive, but has not been paired so far.
	It is well-known that in this case, $\ell$ is the generator of an homology class
	of $\hat{\othercomplex}$. Let 
	\[
		\beta(\hat{\othercomplex}) := \sum_{i=0}^{\towerdim}\beta_i(\hat{\othercomplex})
	\]
	denote the sum of the Betti numbers of the complex. Then, 
	it follows that the number of such row indices is at most $\beta(\hat{\othercomplex})$.
	
	We argue that $\beta(\hat{\othercomplex})\leq 2\towerwidth$ which proves our claim. 
	Assume that $\hat{\complex}_i\subseteq\hat{\othercomplex}\subset\hat{\complex}_{i+1}$.
	We have that 
	$\beta(\hat{\complex}_i)=\beta(\complex_i)$ by Lemma~\ref{lem3},
	and since $\complex_i$ has at most $\towerwidth$ simplices,
	$\beta(\complex_i)\leq\towerwidth$. Since we add at most $\towerwidth$ simplices
	to get from $\hat{\complex}_i$ to $\hat{\othercomplex}$, and each addition can 
	increase $\beta$ by at most one, we have indeed that 
	$\beta(\hat{\othercomplex})\leq 2\towerwidth$.
\end{proof}

\begin{proposition} \label{prop:pers_alg_complexity}
	The algorithm has time complexity $O( \towerwidth^2\cdot\towerdim\cdot n\cdot\log n_0)$
	and space complexity $O(\towerwidth^2)$.
\end{proposition}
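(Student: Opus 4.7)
The plan is to derive the two bounds separately, both by leveraging the structural bounds on $\dsmatrix$ that were already proved in Lemmas~\ref{lem:matrix_width} and~\ref{lem:matrix_height}, combined with the size bound on $\filtration$ from Theorem~\ref{thm:main_theorem_1}.

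For the space bound, I first observe that $\dsmatrix$ has at most $2\towerwidth$ columns by Lemma~\ref{lem:matrix_width} and at most $4\towerwidth$ rows by Lemma~\ref{lem:matrix_height}. Since each column is stored as a sparse sorted list indexed by row indices, a single column contains at most $O(\towerwidth)$ non-zero entries, so $\dsmatrix$ occupies $O(\towerwidth^2)$ space in total. The auxiliary dictionary mapping active pivots to the columns they pivot, and the dictionary of active simplex identifiers with their positive/negative flags, each store $O(\towerwidth)$ items and so contribute $O(\towerwidth)$ additional space. The generator of the stream, i.e., the conversion algorithm of Section~\ref{ssec:algorithm_for_conversion}, uses $O(\towerdim\cdot\towerwidth)$ space by Proposition~\ref{prop:complexity}, which is absorbed in $O(\towerwidth^2)$. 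Combining all terms gives the claimed $O(\towerwidth^2)$ space bound.

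For the time bound, I would proceed in two steps. First, count the total number of events in the input stream: by Theorem~\ref{thm:main_theorem_1}, the filtration $\filtration$ has size $|\hat{\complex}_m|=O(\towerdim\cdot n\log n_0)$, and the number of inactivation events is bounded by the number of elementary contractions, which is also $O(n)$. Hence the stream has length $O(\towerdim\cdot n\log n_0)$, and each event triggers at most one call to \texttt{reduce\_column} or \texttt{remove\_row}. Second, bound the cost per call. A call to \texttt{reduce\_column} performs at most $O(\towerwidth)$ left-to-right column additions, because a non-empty column can only be added by a column whose pivot coincides, and by Lemma~\ref{lem:matrix_width} there are at most $2\towerwidth$ distinct pivots; each addition is a merge of two sorted lists of length $O(\towerwidth)$ (using Lemma~\ref{lem:matrix_height}), so one call costs $O(\towerwidth^2)$. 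Similarly, a call to \texttt{remove\_row} iterates over the $O(\towerwidth)$ columns of $\dsmatrix$ and adds to each a column whose size is $O(\towerwidth)$, again costing $O(\towerwidth^2)$ per call. Multiplying by the number of events yields the bound $O(\towerwidth^2\cdot\towerdim\cdot n\log n_0)$; the cost of the conversion algorithm itself from Proposition~\ref{prop:complexity}, which is $O(\towerdim\cdot |\hat{\complex}_m|\cdot C_\towerwidth)$, is strictly dominated.

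The main obstacle I foresee is the careful treatment of dictionary-operation costs and of the subtle claim that column sizes stay $O(\towerwidth)$ throughout. Row indices of a column are always in the set of stored row indices of $\dsmatrix$, which is bounded via Lemma~\ref{lem:matrix_height}; so the list-length bound follows as long as that invariant is maintained across both subroutines. One must also verify that right-to-left additions inside \texttt{remove\_row} do not blow up the number of non-zeros of any column beyond $O(\towerwidth)$: since every entry introduced lies at a row index already present in $\dsmatrix$, this is indeed guaranteed. Finally, for full rigor, I would state the bound assuming hash tables with expected constant time per dictionary operation; with balanced binary trees one incurs an extra $\log\towerwidth$ factor that is typically absorbed or noted as a side remark.
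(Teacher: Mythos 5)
Your proof is correct and follows essentially the same route as the paper's: bound each call to \texttt{reduce\_column} and \texttt{remove\_row} by $O(\towerwidth^2)$ via the $O(\towerwidth)$ bounds on rows and columns from Lemmas~\ref{lem:matrix_width} and~\ref{lem:matrix_height}, then multiply by the $O(\towerdim\cdot n\log n_0)$ stream length coming from Theorem~\ref{thm:main_theorem_1}, with the space bound following directly from the same two lemmas. (One tiny inaccuracy: the number of \texttt{INACTIVE} events is one per simplex that ever becomes inactive, so it is bounded by $|\hat{\complex}_m|$ rather than by the number of contractions, but this does not affect the asymptotic count of stream elements.)
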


\begin{proof}
	The space complexity is immediately clear from the preceding two lemmas,
	as $\dsmatrix$ is the dominant data structure in terms of space consumption.
	For the time complexity, we observe that both subroutines \texttt{reduce\_column}
	and \texttt{remove\_row} need $O(\towerwidth)$ column additions and $O(\towerwidth)$
        dictionary operations in the worst case. A column addition costs $O(\towerwidth)$,
        and a dictionary operation is not more expensive (since the dictionaries contain
        at most $O(\towerwidth)$ elements and their keys are integers). 
        So, the complexity of both methods is $O(\towerwidth^2)$.
        Since each routine is called at most once per input element, and there are
	$O(\towerdim\cdot n\cdot\log n_0)$ elements by Theorem~\ref{thm:main_theorem_1},
	the bound follows.
\end{proof}

\subsection{Implementation} \label{ssec:reduction_implementation}

The algorithm described in Section~\ref{ssec:reduction_algorithm} is not efficient
in practice for three reasons: First, it has been observed as a general rule
that the \emph{clearing optimization}~\cite{bkrw-phat} significantly
improves the standard algorithm. However, in the above form, that optimization
is not usable because it requires to process the columns in a non-incremental way.
Second, the \texttt{remove\_row} routine scans the entire matrix $\dsmatrix$;
while not affecting the worst-case complexity, frequently scanning the matrix 
should be avoided in practice.
Finally, the above algorithm uses lists to represent columns,
but it has been observed that this is a rather inefficient way to perform
matrix operations~\cite{bkrw-phat}. 

We outline a variant of the above algorithm that partially overcomes these drawbacks
and behaves better in practice. In particular, our variant can be implemented
with all column representations available in the \textsc{Phat} library.
The idea is to perform a ``batch'' variant of the previous algorithm:
We define a \emph{chunk size} $\chunk$ and read in $\chunk$ elements
from the stream; we insert added columns in the matrix,
removing row entries of already known inactive negative columns as before,
but not reducing the columns yet.
After having read $\chunk$ elements, we start the reduction of the newly inserted columns
using the clearing optimization. That is, we go in decreasing dimension
and remove a column as soon as its index becomes the pivot of another column;
see~\cite{ck-twist} for details.
After the reduction ends, except for the last chunk, we go over the columns of the matrix and check for each
pivot whether it is active. If it is, we traverse its row entries in decreasing order, but skipping the pivot. 
Let $\ell$ be the current entry. If $\ell$ is the inactive pivot of some column $j$, we add $j$ to the current column.
If $\ell$ is inactive and represents a negative column, we delete $\ell$ from the current column.
After performing these steps for all remaining columns of the matrix, we go over all columns again, deleting every column with inactive pivot.
(By the way, also cleaning up the secondary data structures described in \ref{ssec:reduction_algorithm}.)

\smallskip

It remains the question of how to choose the parameter $\chunk$. The chunk provides
a trade-off between time and space efficiency. Roughly speaking, the matrix can have
up to $O(\towerwidth+\chunk)$ columns during this reduction, but the larger the chunks
are, the more benefit one can draw from the clearing optimization (the clearing
optimization fails for pairs where the simplices are in different chunks).
We therefore recommend to choose $\chunk$ rather large, but making sure that the 
matrix will still fit into memory.

\subsection{Experimental evaluation}\label{ssec:reduction_experiments}

The tests were made with the same setup as in Section \ref{ssec:algorithm_for_conversion}. 
Figure~\ref{fig:chunk_dgm} shows the effect of the chunk size
parameter $C$ on the runtime and memory consumption of the algorithm.
The data used is \texttt{S3} (see Section \ref{ssec:algorithm_for_conversion});
we also performed the tests on the other examples 
from Table~\ref{tab:resultsAlg1}, with similar outcome.
The File IO operations are included in the measurements. Confirming the theory, as the chunk size decreases, 
our implementation needs less space 
but more computation time (while the running time
seems to increase slightly again for larger chunk sizes).

\begin{figure}[h]
	\centering

	\includegraphics[width=129mm]{./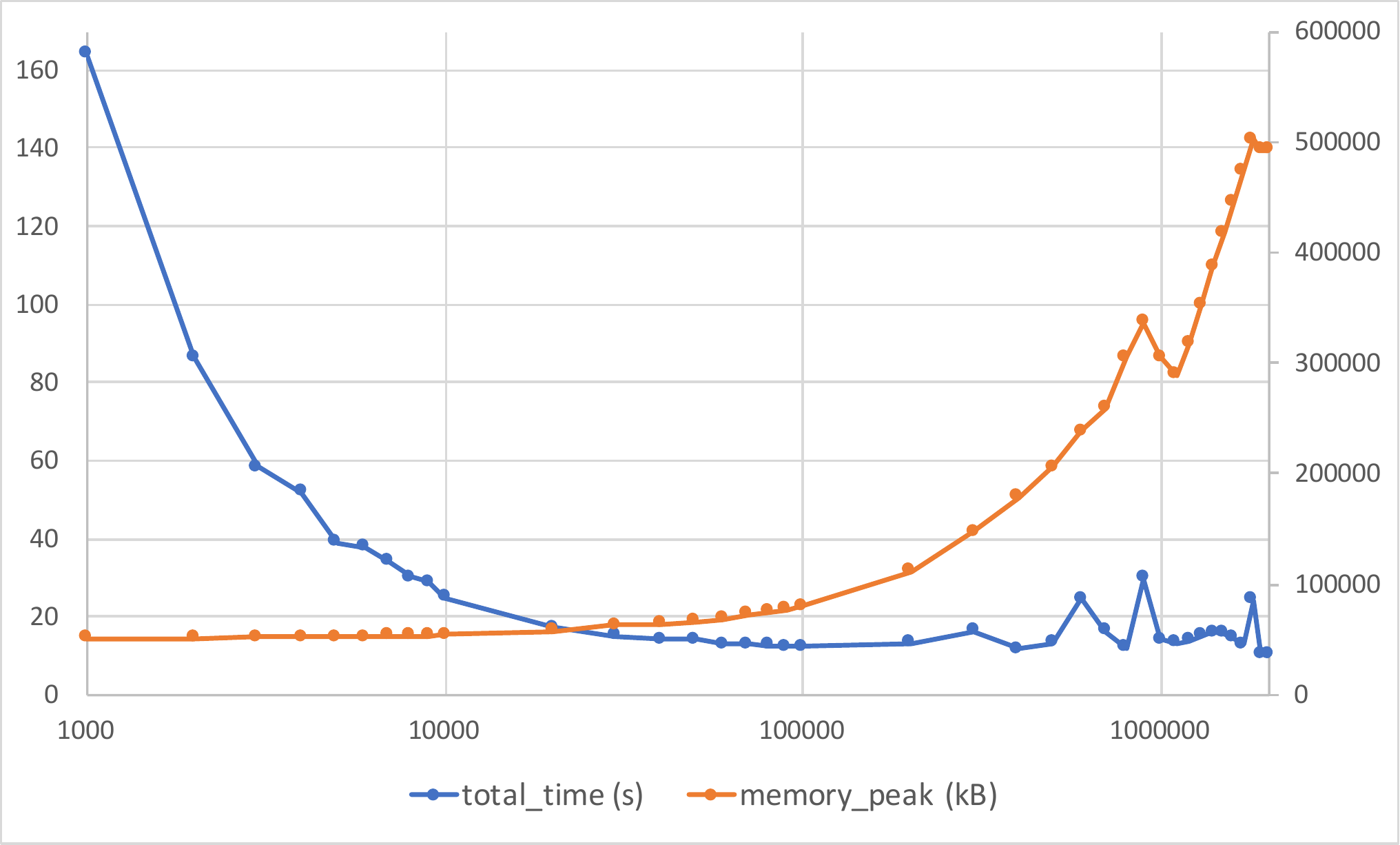}

	\caption{Evolution of processing time (left Y-axis in sec) and process memory peak (right Y-axis in kB) 
	depending on the chunk size (logarithmic X-axis)} 
	\label{fig:chunk_dgm}
\end{figure}

For the $4.6\cdot 10^9$ inclusions tower from Section \ref{ssec:algorithm_for_conversion}, with $\chunk = 200\,000$, 
the algorithm took around 4.5 hours, 
the virtual memory used was constantly around $68$ MB and 
the resident set size constantly around $49$ MB, confirming the theoretical
statement that the memory size does not depend on the length of the filtration.

\section{Conclusion}
In the first part of the paper, we have presented an efficient algorithm
to reduce the computation of the barcode of a simplicial tower to the computation
of a barcode of a filtration with slightly larger size. With our approach,
every algorithmic improvement for persistence computation on the 
filtration case becomes immediately applicable to the case of towers as well.
In the second part of the paper, we present a streaming variant of the classical
persistence algorithm for the case of towers. In here, 
the extra information provided by the towers allows a more space efficient
storage of the boundary matrix.

There are various theoretical and practical questions remaining for further
work: as already exposed in 
Section~\ref{ssec:tightness_bounds}, the question of how large can the barcode
of a tower become has immediate consequences on the conversion from towers
to filtrations. Moreover, while we focused on constant dimension in 
Section~\ref{ssec:tightness_bounds}, we cannot exclude the possibility
that our algorithm achieves a better asymptotic bound for non-constant dimensions.

We made our software publicly available in the Sophia library.
There are several open questions regarding practical performance.
For instance, our complex representation, based on hash table,
could be replaced with other variants, such as the Simplex tree~\cite{simplextree}. 
Moreover, it would be interesting to compare our streaming approach
for the barcode computation with a version that converts to a filtration
and subsequently computes the barcode with the annotation algorithm.
The reason is that the latter algorithm only maintains a cohomology basis
of the currently active complex in memory and therefore avoids
the storage of the entire boundary matrix.

Since both the simplex tree and annotation algorithm are part of the Gudhi
library~\cite{gudhi:urm}, we plan to integrate our conversion algorithm
in an upcoming version of the Gudhi library, both to increase the usability
of our software and to facilitate the aforementioned comparisons.

\paragraph{Acknowledgements} The authors are supported by the Austrian Science Fund (FWF) grant number P 29984-N35.

\bibliographystyle{plain}
\bibliography{bib}

\begin{thebibliography}{10}

\bibitem{chunk}
U.~Bauer, M.~Kerber, and J.~Reininghaus.
\newblock {Clear and Compress: Computing Persistent Homology in Chunks}.
\newblock In {\em {Topological Methods in Data Analysis and Visualization
  III}}, {Mathematics and Visualization}, pages 103--117. Springer, 2014.

\bibitem{bkr-distributed}
U.~Bauer, M.~Kerber, and J.~Reininghaus.
\newblock {Distributed Computation of Persistent Homology}.
\newblock In {\em {Workshop on Algorithm Engineering and Experiments
  (ALENEX)}}, pages 31--38, 2014.

\bibitem{bkrw-phat}
U.~Bauer, M.~Kerber, J.~Reininghaus, and H.~Wagner.
\newblock {Phat~-- Persistent Homology Algorithms Toolbox}.
\newblock {\em Journal of Symbolic Computation}, 78:76--90, 2017.

\bibitem{bdm-compressed}
J.-D. Boissonnat, T.~Dey, and C.~Maria.
\newblock {The Compressed Annotation Matrix: An Efficient Data Structure for
  Computing Persistent Cohomology}.
\newblock {\em Algorithmica}, 73(3):607--619, 2015.

\bibitem{simplextree}
J.-D. Boissonnat and C.~Maria.
\newblock The simplex tree: An efficient data structure for general simplicial
  complexes.
\newblock {\em Algorithmica}, 70:406--427, 2014.

\bibitem{bs-approximating}
M.~Botnan and G.~Spreemann.
\newblock Approximating persistent homology in euclidean space through
  collapses.
\newblock {\em Applied Algebra in Engineering, Communication and Computing},
  26(1-2):73--101, 2015.

\bibitem{carlsson-survey}
G.~Carlsson.
\newblock {Topology and Data}.
\newblock {\em Bulletin of the AMS}, 46:255--308, 2009.

\bibitem{zigzag}
G.~Carlsson and V.~de~Silva.
\newblock {Zigzag Persistence}.
\newblock {\em Foundations of Computational Mathematics}, 10(4):367--405, 2010.

\bibitem{csm-zigzag}
G.~Carlsson, V.~de~Silva, and D.~Morozov.
\newblock {Zigzag Persistent Homology and Real-valued Functions}.
\newblock In {\em {ACM Symp.\ on Computational Geometry (SoCG)}}, pages
  247--256, 2009.

\bibitem{ck-twist}
C.~Chen and M.~Kerber.
\newblock {Persistent Homology Computation With a Twist}.
\newblock In {\em {European Workshop on Computational Geometry (EuroCG)}},
  pages 197--200, 2011.

\bibitem{ck-output}
C.~Chen and M.~Kerber.
\newblock {An output-sensitive algorithm for persistent homology}.
\newblock {\em Computational Geometry: Theory and Applications}, 46:435--447,
  2013.

\bibitem{ckr-polynomial}
A.~Choudhary, M.~Kerber, and S.~Raghvendra.
\newblock {Polynomial-Sized Topological Approximations Using The
  Permutahedron}.
\newblock In {\em {32nd Int.\ Symp.\ on Computational Geometry (SoCG)}}, pages
  31:1--31:16, 2016.

\bibitem{cormen}
T.~Cormen, C.~Leiserson, R.~Rivest, and C.~Stein.
\newblock {\em Introduction to Algorithms}.
\newblock {MIT} Press, 3rd edition, 2009.

\bibitem{smv-duality}
V.~de~Silva, D.~Morozov, and M.~Vejdemo-Johansson.
\newblock {Dualities in persistent (co)homology}.
\newblock {\em Inverse Problems}, 27:124003, 2011.

\bibitem{dfw-computing}
T.~Dey, F.~Fan, and Y.~Wang.
\newblock Computing topological persistence for simplicial maps.
\newblock In {\em {ACM Symp.\ on Computational Geometry (SoCG)}}, SoCG'14,
  pages 345:345--345:354, New York, NY, USA, 2014. ACM.

\bibitem{dsw-simba}
T.~Dey, D.~Shi, and Y.~Wang.
\newblock {SimBa: An efficient tool for approximating Rips-filtration
  persistence via Simplicial Batch-collapse}.
\newblock In {\em {European Symp.\ on Algorithms (ESA)}}, pages 35:1--35:16,
  2016.

\bibitem{edelsbrunnerharer}
H.~Edelsbrunner and J.~Harer.
\newblock {\em Computational Topology: an Introduction}.
\newblock American Mathematical Society, 2010.

\bibitem{elz-topological}
H.~Edelsbrunner, D.~Letscher, and A.~Zomorodian.
\newblock Topological persistence and simplification.
\newblock {\em Discrete {\&} Computational Geometry}, 28(4):511--533, 2002.

\bibitem{ep-computational}
H.~Edelsbrunner and S.~Parsa.
\newblock {On the Computational Complexity of {B}etti Numbers: Reductions from
  Matrix Rank}.
\newblock In {\em {{ACM-SIAM} Symp.\ on Discrete Algorithms (SODA)}}, pages
  152--160, 2014.

\bibitem{gabriel}
P.~Gabriel.
\newblock {Unzerlegbare {D}arstellungen {I}}.
\newblock {\em manuscripta mathematica}, 6(1):71--103, 1972.

\bibitem{hatcher}
A.~Hatcher.
\newblock {\em {Algebraic Topology}}.
\newblock {Cambridge University Press}, 2002.

\bibitem{kerber-survey}
M.~Kerber.
\newblock {Persistent Homology: State of the art and challenges}.
\newblock {\em Internationale Mathematische Nachrichten}, 231:15--33, 2016.

\bibitem{ks_socg17}
M.~Kerber and H.~Schreiber.
\newblock {Barcodes of Towers and a Streaming Algorithm for Persistent
  Homology}.
\newblock In {\em {33rd Int.\ Symp.\ on Computational Geometry (SoCG)}}, pages
  57:1--57:16, 2017.

\bibitem{ks-approximate}
M.~Kerber and R.~Sharathkumar.
\newblock {Approximate \v{C}ech Complex in Low and High Dimensions}.
\newblock In {\em {Int.\ Symp.\ on Algortihms and Computation (ISAAC)}}, pages
  666--676, 2013.

\bibitem{mbgy-gudhi}
C.~Maria, J.-D. Boissonnat, M.~Glisse, and M.~Yvinec.
\newblock {The {G}udhi Library: Simplicial Complexes and Persistent Homology}.
\newblock In {\em {Int.\ Congress on Mathematical Software (ICMS)}}, volume
  8592 of {\em {Lecture Notes in Computer Science}}, pages 167--174, 2014.

\bibitem{mo-zigzag}
C.~Maria and S.~Oudot.
\newblock {Zigzag Persistence via Reflections and Transpositions}.
\newblock In {\em {ACM-SIAM Symp.\ on Discrete Algorithms (SODA)}}, pages
  181--199, 2015.

\bibitem{mms-zigzag}
N.~Milosavljevic, D.~Morozov, and P.~Skraba.
\newblock {Zigzag persistent homology in matrix multiplication time}.
\newblock In {\em {{ACM} Symp.\ on Computational Geometry (SoCG)}}, pages
  216--225, 2011.

\bibitem{munkres}
J.~Munkres.
\newblock {\em {{E}lements of {A}lgebraic {T}opology}}.
\newblock Perseus Publishing, 1984.

\bibitem{optgh-roadmap}
N.~Otter, M.~Porter, U.~Tillmann, P.~Grindrod, and H.~Harrington.
\newblock {A roadmap for the computation of persistent homology}.
\newblock {\em arXiv}, abs/1506.08903, 2015.

\bibitem{oudot-book}
S.~Oudot.
\newblock {\em Persistence Theory: From Quiver Representations to Data
  Analysis}, volume 209 of {\em Mathematical Surveys and Monographs}.
\newblock American Mathematical Society, 2015.

\bibitem{sheehy-linear}
D.~Sheehy.
\newblock {Linear-size approximation to the {V}ietoris-{R}ips Filtration}.
\newblock {\em Discrete {\&} Computational Geometry}, 49:778--796, 2013.

\bibitem{gudhi:urm}
{The GUDHI Project}.
\newblock {\em {GUDHI} User and Reference Manual}.
\newblock {GUDHI Editorial Board}, 2015.

\bibitem{zc-computing}
A.~Zomorodian and G.~Carlsson.
\newblock {Computing Persistent Homology}.
\newblock {\em Discrete {\&} Computational Geometry}, 33:249--274, 2005.

\end{thebibliography}

\end{document}